\theoremstyle{definition}
\newtheorem{prop}{Proposition}
\newtheorem{assumption}{Assumption}
\newtheorem{lemma}{Lemma}
\newtheorem{definition}{Definition}
\newtheorem{remark}{Remark}
\newtheorem{theorem}{Theorem}
\begin{document}

%
\title{Adaptive Flight Control in the Presence of Limits on Magnitude and Rate}
%
%
%

\author{Joseph~E.~Gaudio,~Anuradha~M.~Annaswamy,~Michael~A.~Bolender, and Eugene~Lavretsky
\thanks{J.E. Gaudio and A.M. Annaswamy are with the Department
of Mechanical Engineering, Massachusetts Institute of Technology, Cambridge,
MA, 02139 USA (email: jegaudio@mit.edu and aanna@mit.edu).}
\thanks{M.A. Bolender is with the Air Force Research Laboratory, WPAFB, OH, 45433 USA.}
\thanks{E. Lavretsky is with The Boeing Company, Huntington Beach, CA, 92647 USA (email: eugene.lavretsky@boeing.com).}
}

\maketitle

\begin{abstract}
Input constraints as well as parametric uncertainties must be accounted for in the design of safe control systems. This paper presents an adaptive controller for multiple-input-multiple-output (MIMO) plants with input magnitude and rate saturation in the presence of parametric uncertainties. A filter is introduced in the control path to accommodate the presence of rate limits. An output feedback adaptive controller is designed to stabilize the closed loop system even in the presence of this filter. The overall control architecture includes adaptive laws that are modified to account for the magnitude and rate limits. Analytical guarantees of bounded solutions and satisfactory tracking are provided. Three flight control simulations with nonlinear models of the aircraft dynamics are provided to demonstrate the efficacy of the proposed adaptive controller for open loop stable and unstable systems in the presence of uncertainties in the dynamics as well as input magnitude and rate saturation.
\end{abstract}

\begin{IEEEkeywords}
Adaptive control, magnitude saturation, rate saturation, flight control
\end{IEEEkeywords}

%
\IEEEpeerreviewmaketitle

\section{Introduction}
%
%
%
%
\IEEEPARstart{A}{ny} advanced control system must be capable of incorporating realistic constraints on control inputs such as magnitude limits and rate limits. In flight control, such constraints are commonly present due to actuator limits. While magnitude saturation is often accounted for in the underlying control design, the nonlinearity arising from input rate saturation is often ignored. Actuator rate saturation can lead to Pilot Induced Oscillations (PIO), which expose the aircraft to the risk of failures, and in worst cases, to departing flight \cite{Sofrony_2006,Duda_1995,Amato}. Crashes in the SAAB Grippen development are evidences of the latter \cite{Rundqwist_1996,Rundqwist_1996a}. Additionally, in the event of control surface damage, additional complexities may arise if rate saturation is dominant.

Over the past four decades, adaptive control of linear time-invariant plant models in the presence of parametric uncertainties, perturbations due to bounded disturbances, and unmodeled dynamics has been studied extensively \cite{Narendra_1989,Sastry_1989,Astrom_1995,Ioannou1996,Narendra2005}. Adaptive control in the presence of magnitude constraints has been addressed in \cite{karason1994,Annaswamy_1995,Schwager2005,Jang2009,Lavretsky2007,Serrani2009}. With the first set of results on this topic reported in \cite{karason1994,Annaswamy_1995}, references \cite{Schwager2005,Jang2009} extended the analysis to the multiple-input state feedback case. Design of a state feedback magnitude saturation control architecture with a buffer region was shown in \cite{Lavretsky2007}, by modifying the reference model. Reference \cite{Serrani2009} presents an application dependent architecture. None of these references, however, address input rate saturation.

Rate saturation architectures have been considered in \cite{Hess1997,Barbu_2005,Galeani2008,Kahveci2008,Matsutani2010,Astrom_1989,Astrom_2008,Gaudio_2018}. Reference \cite{Hess1997} proposes directly differentiating the control signal in order to saturate the control rate before re-integrating the signal. References \cite{Barbu_2005,Galeani2008} also propose a non-adaptive, robustness based architectures. These non-adaptive architectures do not directly compensate for plant parametric uncertainty. Reference \cite{Kahveci2008} proposes a state feedback indirect adaptive control architecture, while \cite{Matsutani2010} proposes a direct model reference control architecture, although a matching condition is violated. The integrator anti-windup architecture presented in \cite{Astrom_1989} is proposed for systems with input saturation, but does not include a proof of stability for an adaptive system and indeed may result in instability for certain adaptive systems \cite{Astrom_2008}. The controller presented in this paper is significantly less restrictive than these papers in that it provides a solution for the MIMO case with guarantees of stability. It is assumed that the control input is subjected to a hard limit on its rate and therefore differs from our earlier work in \cite{Gaudio_2018} which imposed only a soft limit, i.e. the control rate was allowed to exceed the specified limits.

The problem addressed in this paper is the control and command tracking of plant dynamics in the presence of parametric uncertainties, using input-output measurements, even when the plant input is subjected to hard limits on its magnitude and rate. In order to introduce a rate limit on the control input without explicit differentiation, a filter is designed with hard saturation nonlinearities, similar to \cite{Sofrony_2006,Amato,Rundqwist_1996,Rundqwist_1996a,Hess1997,Kahveci2008,Matsutani2010}. The introduction of such a filter however causes new challenges in the form of an additional lag. This in turn causes the underlying plant to have an increase in relative degree. This, and the fact that we only have outputs available for feedback, suggests the use of a MIMO adaptive controller that uses output feedback and that is applicable for a plant with higher relative degree. For this purpose, we propose an adaptive control approach that is along the lines of \cite{Zheng2013,Qu2015,Qu2016,Qu2016a}. While the results of these papers guarantee stability, they do not consider either a magnitude or a rate-limit (and will be shown empirically to result in instability in this paper). The contribution of this paper is that a similar controller as in \cite{Zheng2013,Qu2015,Qu2016,Qu2016a}, augmented to account for magnitude and rate limits, can still be shown to lead to bounded solutions even in the presence of these limits.

This paper proceeds as follows: Section \ref{s:Preliminaries} presents mathematical preliminaries. Section \ref{s:Problem_Formulation} describes the problem formulation of the output feedback control a plant with magnitude and rate saturation in the presence of parametric uncertainties. The adaptive control architecture is presented in Section \ref{s:Adaptive_Control_Design}. Stability analysis of the closed-loop system follows in Section \ref{s:Stability_Analysis}. Section \ref{s:Simulations} demonstrates the efficacy of the proposed controller in numerical simulations with fighter and hypersonic aircraft, with conclusions following in Section \ref{s:Conclusion}.

\section{Preliminaries}\label{s:Preliminaries}
The following notation is used for a MIMO plant model with $m$ inputs and $p$ outputs: $\{A,B,C\}:=C(sI-A)^{-1}B$, where $s=\frac{d}{dt}$ is the differential operator. The transmission zeros of this system are defined using Definition 1 of reference \cite{Qu2015}. The columns of the matrix $B$ may be partitioned as $B=[b_1,b_2,\cdots,b_m]$. The input relative degree of the MIMO plant model is stated as follows.
\begin{definition}{\cite{Qu2016}}
	A linear square plant given by $\{A,B,C\}$ has
	\begin{enumerate}[label=\alph*)]
		\item input relative degree $r=[r_1,r_2,...,r_m]^T\in\mathbb{N}^{m\times 1}$ if
		\begin{enumerate}[label=\roman*)]
			\item $\forall j\in\{1,...,m\}$, $\forall k\in\{0,...,r_j-2\}$: $CA^kb_j=0_{p\times 1}$, and
			\item $rank[CA^{r_1-1}b_1\quad CA^{r_2-1}b_2...CA^{r_m-1}b_m]=m$
		\end{enumerate}
		\item uniform input relative degree $r\in\mathbb{N}$ if it has input relative degree $r=[r_1,r_2,...,r_m]^T$ and $r=r_1=r_2=...=r_m$
	\end{enumerate}
\end{definition}
Every MIMO plant model thus has an input relative degree. The following proposition describes a minimal realization where there are differentiators on the plant input.
\begin{prop}{\cite{Qu2016}}
	\label{p:Coord_Change}
	Given a linear system $\{A,B_2,C\}$ with uniform input relative degree 2, the following two realizations are equivalent:
	\begin{enumerate}
		\item $\dot{x}(t)=Ax(t)+B_2(a_1^1s+a_1^0)[u(t)]$, \quad $y(t)=Cx(t)$
		\item $\dot{x}'(t)=Ax'(t)+B_2^1u(t)$, \quad $y(t)=Cx'(t)$
	\end{enumerate}
where 
\begin{equation}
\label{e:x_prime}
x'(t)=x(t)-B_2a_1^1u(t)
\end{equation}
	\begin{equation}
	\label{e:B21}
	B_2^1 = AB_2a_1^1+B_2a_1^0
	\end{equation}
\end{prop}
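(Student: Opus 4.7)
The plan is to verify equivalence by a direct change of coordinates: I would show that the trajectory $x'(t)$ defined by \eqref{e:x_prime} satisfies the state and output equations of realization~2 whenever $x(t)$ satisfies those of realization~1 (and vice versa), for the same input $u(t)$. This is essentially a bookkeeping computation, but the relative degree hypothesis has to enter somewhere, and it will enter through the output equation.

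First, I would expand realization~1 by applying the polynomial differential operator to $u$, giving
\begin{equation*}
\dot{x}(t) = Ax(t) + B_2 a_1^1 \dot{u}(t) + B_2 a_1^0 u(t).
\end{equation*}
Next, differentiating \eqref{e:x_prime} yields $\dot{x}'(t) = \dot{x}(t) - B_2 a_1^1 \dot{u}(t)$, so the $\dot{u}$ term cancels:
\begin{equation*}
\dot{x}'(t) = Ax(t) + B_2 a_1^0 u(t).
\end{equation*}
Substituting $x(t) = x'(t) + B_2 a_1^1 u(t)$ back in and collecting the $u(t)$ terms produces the matrix $AB_2 a_1^1 + B_2 a_1^0$, which is exactly $B_2^1$ as defined in \eqref{e:B21}. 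This gives the state equation of realization~2.

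The one remaining step is the output equation: I need $y(t) = Cx'(t) = Cx(t)$, which by \eqref{e:x_prime} reduces to showing $CB_2 a_1^1 u(t) = 0$. This is where the uniform input relative degree~2 assumption is essential. By Definition~1a-i applied with $r_j = 2$ and $k = 0$, we have $Cb_j = 0$ for every $j \in \{1,\dots,m\}$, hence $CB_2 = 0$, which makes the required term vanish identically. The converse direction (realization~2 $\Rightarrow$ realization~1) follows by inverting the coordinate change, which is well-defined since $u(t)$ appears algebraically in \eqref{e:x_prime}.

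I do not anticipate a substantive obstacle; the only subtle point is recognizing that $a_1^1 \dot{u}$ is absorbed into the coordinate change \eqref{e:x_prime} precisely because the relative degree condition guarantees $CB_2 = 0$, so no spurious feedthrough term appears at the output. The construction can therefore be viewed as a state-space realization of the classical trick of pulling a derivative off the input through a similarity-like transformation, legitimized here by the rank/zero conditions that come with uniform relative degree~2.
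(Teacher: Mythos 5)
Your proof is correct and is essentially the intended argument: the paper itself defers the proof to \cite{Qu2016}, but the coordinate change $x'(t)=x(t)-B_2a_1^1u(t)$ is already supplied in the statement, and your direct verification --- cancel the $\dot{u}$ term in the state equation, collect $AB_2a_1^1+B_2a_1^0=B_2^1$, and use $CB_2=0$ (from relative degree $2$ with $k=0$) to kill the feedthrough in the output --- is exactly that verification. It also matches the paper's accompanying remark that $C(sI-A)^{-1}B_2s=C(sI-A)^{-1}AB_2$ since $CB_2=0$, which is the transfer-function form of your time-domain computation.
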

The following identity holds: $(sI-A)^{-1}s=I+(sI-A)^{-1}A$. Thus for a relative degree two input: $C(sI-A)^{-1}B_2s=C(sI-A)^{-1}AB_2$ as $CB_2$ = 0. In this paper elliptical saturation is considered.
\begin{definition}{\cite{Schwager2005}}
An elliptical saturation function of a vector $v(t)$ is defined as
\begin{equation}\label{e:Es}
E_s(v(t),v_{max}) = 
\begin{cases}
v(t) &\quad||v(t)||<g(v(t))\\
\bar{v}(t) &\quad||v(t)||>g(v(t)) \ 
\end{cases}
\end{equation}
where the function $g(v(t))$ is expressed as
\begin{equation}
g(v(t))=\left(\sum_{i=1}^{m}\left[\frac{\hat{e}_i}{(v_{max})_i}\right]^2\right)^{-1/2}
\end{equation}
where $\hat{e}=\frac{v}{||v||}$ and $\bar{v}=\hat{e}g(v)$ (see Figure \ref{f:Elliptical_Saturation}).
\end{definition}
It can be noted that this saturation function can be alternatively implemented using the projection operator as in \cite{Lavretsky2013}.

\begin{figure}[!t]
	\centering
	\includegraphics[trim={4.2cm 4.2cm 3.9cm 4.2cm},clip,width=0.45\textwidth]{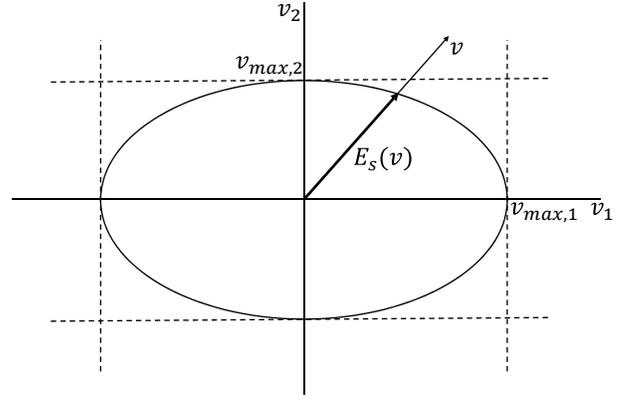}
	\caption{Elliptical saturation function for a two dimensional vector.}
	\label{f:Elliptical_Saturation}
\end{figure}

\section{Problem Formulation}\label{s:Problem_Formulation}
We consider a class of linear plants of the form
\begin{align}
\label{e:plant}
	\begin{split}
		\dot{x}_p(t)&=A_px_p(t)+B_p\Lambda^*[u_p(t)+\Theta_p^{*T}x_p(t)]\\
		y_p(t)&=C_px_p(t)\\
		z(t)&=C_{pz}x_p(t)+D_{pz}\Lambda^*[u_p(t)+\Theta_p^{*T}x_p(t)]
	\end{split}
\end{align}
with known matrices of dimensions: $A_p\in\mathbb{R}^{n_p\times n_p}$, $B_p\in\mathbb{R}^{n_p\times m}$, $C\in\mathbb{R}^{p_p\times n_p}$, $C_{pz}\in\mathbb{R}^{n_z\times n_p}$, and $D_{pz}\in\mathbb{R}^{n_z\times m}$, $n_p\geq p_p\geq m\geq n_z$. The diagonal matrix $\Lambda^*\in\mathbb{R}^{m\times m}$, and $\Theta_p^*\in\mathbb{R}^{n_p\times m}$ represent unknown constant matched uncertainty which enter the plant dynamics through the columns of $B_p$. These uncertainty locations arise in a variety of applications, including in aircraft dynamics \cite{Lavretsky2013}. $x_p$, $u_p$, $y_p$, and $z$ denote the plant state, plant input, plant output, and regulated output respectively. The following assumptions are made of the plant in (\ref{e:plant}):
\begin{assumption}\label{a:minimal}
	$\{A_p,B_p,C_p\}$ is a minimal realization;
\end{assumption}
\begin{assumption}\label{a:tzero}
	All transmission zeros of $\{A_p,B_p,C_p\}$ and $\{A_p,B_p,C_{pz},D_{pz}\}$ are stable;
\end{assumption}
\begin{assumption}\label{a:rel1}
	$\{A_p,B_p,C_p\}$ is uniform relative degree one;
\end{assumption}
\begin{assumption}\label{a:Theta_p_star}
	The uncertainty $\Theta_p^*$ is bounded by a known value, i.e. $||\Theta_p^*||<\Theta_{p,max}$;
\end{assumption}
\begin{assumption}\label{a:Lambda}
	The uncertainty $\Lambda^*$ is diagonal positive definite and bounded by a known value along with its inverse, i.e. $||\Lambda^*||<\Lambda_{max}$, $||\Lambda^{*-1}||<\Lambda_{inv,max}$.
\end{assumption}
Assumptions \ref{a:minimal} and \ref{a:tzero} are standard in output feedback adaptive control \cite{Narendra2005}, where the condition of stable transmission zeros of $\{A_p,B_p,C_{pz},D_{pz}\}$ is employed to ensure controllability of an extended plant in the presence of integral tracking (to be defined) \cite{Lavretsky2013}. The relative degree statement in Assumption \ref{a:rel1} is commonly satisfied for aircraft control systems. The additive uncertainty $\Theta_p^*$ is assumed to be bounded by a known value in Assumption \ref{a:Theta_p_star}. Assumption \ref{a:Lambda} states that the uncertain control effectiveness of each input path are independent of one another, upper bounded by a known value and bounded away from zero by a known value. Known bounds on the norm of the system uncertainty as in Assumptions \ref{a:Theta_p_star} and \ref{a:Lambda} are required given that constraints will be imposed on the plant input.

The goal is to design $u_p$ so that $z$ tracks a bounded command $z_{cmd}$. To ensure a small tracking error, an integral error state $x_e$ is generated as \cite{Lavretsky2013}
\begin{equation}
\label{e:integral_error}
\dot{x}_e(t)=z(t)-z_{cmd}(t).
\end{equation}

\subsection{Rate Saturation}
The goal of this paper is to design an adaptive controller that will accommodate the parametric uncertainties in (\ref{e:plant}) and carry out the desired tracking using a control input which is rate limited, and possibly magnitude limited as well. If the time derivative of the computed control input $u$ were to be available, a filtered control rate $u_r$ could be generated as
\begin{equation}\label{e:u_r_tau}
\tau \dot{u}_r(t) + u_r(t) = \dot{u}(t)
\end{equation}
where $u$ is the output of an adaptive controller. Equation (\ref{e:u_r_tau}) implies that for a small enough filter time constant $\tau$, $u_r(t)\approx \dot{u}(t)$. One could then recover the actual output of the controller $u$ by simply integrating $u_r$ and setting it to be equal to the control input $u_p$ as
\begin{equation}\label{e:u_p_dot_ns}
\dot{u}_p(t)=u_r(t).
\end{equation}
It should be noted that $u_r$ in (\ref{e:u_r_tau}) can be realized without explicit differentiation of $u$, as
\begin{equation}\label{e:filter_laplace}
    u_r(t)=\frac{1}{\tau}u(t)-\frac{1}{\tau}\left[\frac{1}{\tau s+1}\right]u(t)
\end{equation}
where $s$ denotes the operator $d/dt$. Equations (\ref{e:u_r_tau}), (\ref{e:u_p_dot_ns}) provide us with a method for generating a control input $u_p(t)$ that is rate-limited by simply replacing $u_r$ with a saturation function of $u_r$, and generate $u_p(t)$ as
\begin{align}\label{e:u_r_tau_sat}
\tau \dot{u}_r(t) + E_s(u_r(t),u_{r,max}) &= \dot{u}(t)\\
\label{e:dot_u_p}
\dot{u}_p(t)&=E_s(u_r(t),u_{r,max})
\end{align}
instead of equations (\ref{e:u_r_tau}) and (\ref{e:u_p_dot_ns}), where $u_{r,max}$ is the desired rate limit on the control input. As in (\ref{e:filter_laplace}), we note that $u_r$ can be realized without using explicit differentiation as
\begin{equation}\label{e:filtered_temp}
u_r(t)=\frac{1}{\tau}(u(t)-u_p(t))
\end{equation}
which can be used to generate the approximate derivative $u_r$ rather than the differential form (\ref{e:u_r_tau_sat}).

\subsection{Magnitude Saturation}
In addition to rate saturation, it is easy to ensure that the control input $u_p$ is magnitude limited as well. For this purpose, rather than (\ref{e:filtered_temp}), we generate $u_r$ as
\begin{equation}
\label{e:u_r}
u_r(t)=\frac{1}{\tau}(E_s(u(t),u_{max})-u_p(t))
\end{equation}
where $u_{max}$ denotes the magnitude limit of $u$. Equations (\ref{e:dot_u_p}) and (\ref{e:u_r}) lead to a realization of a control input $u_p$ into the plant that is magnitude limited and rate limited, starting from the output $u$ from the adaptive controller (see Figure \ref{f:Block_Diagram} for a schematic).

\begin{figure*}[!t]
	\centering
	\includegraphics[trim={0.2cm 7cm 0.5cm 8.3cm},clip,width=0.9\textwidth]{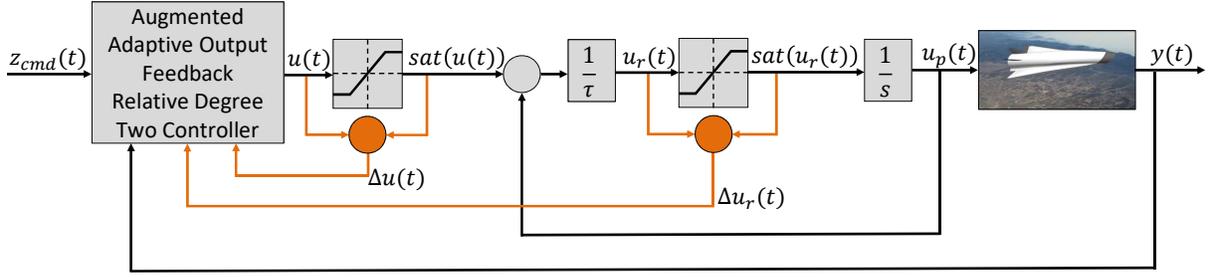}
	\caption{Adaptive controller with input magnitude and rate limiter block diagram.}
	\label{f:Block_Diagram}
\end{figure*}

\subsection{Saturation Effects as Disturbances}
The saturation functions in magnitude and rate can be viewed as two nonlinearities. As in \cite{karason1994,Matsutani2010}, we accommodate these nonlinearities by treating their impact as additive known disturbances. In particular defining two known disturbance terms $\Delta u$ and $\Delta u_r$ as
\begin{align}
\label{e:Control_Deficiency}
\begin{split}
\Delta u(t)&=E_s(u(t),u_{max})-u(t)\\
\Delta u_r(t)&=E_s(u_r(t),u_{r,max})-u_r(t)
\end{split}
\end{align}
it is easy to see that if $u$ does not reach its magnitude saturation limit $u_{max}$, then $\Delta u(t)\equiv 0$. Similarly, if the input rate $u_r$ does not reach its rate saturation limit $u_{r, max}$, then $\Delta u_r(t)\equiv 0$, that is, these known disturbance terms become non-zero only if the magnitude or rate limits are exceeded.

Combining (\ref{e:dot_u_p}), (\ref{e:u_r}), and (\ref{e:Control_Deficiency}) results in the following compact relation between the plant input and the control input:
\begin{equation}
\label{e:filter}
\dot{u}_p(t)=-\frac{1}{\tau}u_p(t)+\frac{1}{\tau}u(t)+\frac{1}{\tau}\Delta u_2(t)
\end{equation}
where $\Delta u_2(t)=(\Delta u(t)+\tau\Delta u_r(t))$ represents the combined effects of magnitude and rate saturation (see Figure \ref{f:Block_Diagram} for a schematic). That is, equation (\ref{e:filter}) determines the relation between $u_p$, the plant input, and $u$, the computed control input in a compact manner.

\subsection{Full Plant Equations}
We now assemble the complete plant model that includes the plant dynamics, the integral of the tracking error, and the effects of rate and magnitude saturation, which are given by equations (\ref{e:plant}), (\ref{e:integral_error}), and (\ref{e:filter}), respectively. This is given by
\begin{equation}\label{e:full_plant_full}
\begin{alignedat}{2}
\underbrace{
\begin{bmatrix}
\dot{x}_p(t)\\
\dot{w}_u(t)\\
\dot{x}_e(t)
\end{bmatrix}
}_{\dot{x}(t)}
&=&&
\underbrace{
\begin{bmatrix}
A_p&B_p&0\\
0&-\frac{1}{\tau}I&0\\
C_{pz}&D_{pz}&0
\end{bmatrix}
}_{A}
\underbrace{
\begin{bmatrix}
x_p(t)\\
w_u(t)\\
x_e(t)
\end{bmatrix}
}_{x(t)}
\\
&&&+
\underbrace{
\begin{bmatrix}
B_p\\
0\\
D_{pz}\\
\end{bmatrix}
}_{B_1}
\Lambda^*\Theta^{*T}_px_p(t)
+
\underbrace{
\begin{bmatrix}
0\\
\frac{1}{\tau}I\\
0
\end{bmatrix}
}_{B_2}
\Lambda^* u(t)
\\
&&&+
\begin{bmatrix}
0\\
\frac{1}{\tau}I\\
0
\end{bmatrix}
\Lambda^*\Delta u_2(t)
+
\underbrace{
\begin{bmatrix}
0\\
0\\
-I\\
\end{bmatrix}
}_{B_z}
z_{cmd}(t)
\\
y(t)&=&~&
\underbrace{
\begin{bmatrix}
C_p&0&0\\
0&0&I
\end{bmatrix}
}_{C}
\begin{bmatrix}
x_p(t)\\
w_u(t)\\
x_e(t)
\end{bmatrix}
\\
z(t)&=&~&
\underbrace{
\begin{bmatrix}
C_{pz}&D_{pz}&0
\end{bmatrix}
}_{C_z}
\begin{bmatrix}
x_p(t)\\
w_u(t)\\
x_e(t)
\end{bmatrix}
+
D_{pz}\Lambda^*\Theta^{*T}_px_p(t).
\end{alignedat}
\end{equation}
where $w_u:=\Lambda^*u_p$. Equation (\ref{e:full_plant_full}) can be expressed in a compact form as
\begin{equation}
\label{e:Plant_Simple}
\begin{alignedat}{2}
\dot{x}(t)&=&~&Ax(t)+B_1\Psi^{*T}_1x(t)+B_2\Lambda^*u(t)\\
&&&+B_2\Lambda^*\Delta u_2(t)+B_zz_{cmd}(t)\\
y(t)&=&~&Cx(t)\\
z(t)&=&~&C_zx(t)+D_{pz}\Psi^{*T}_1x(t)
\end{alignedat}
\end{equation}
where $A\in\mathbb{R}^{n\times n}$, $B_1\in\mathbb{R}^{n\times m}$, $B_2\in\mathbb{R}^{n\times m}$, $B_z\in\mathbb{R}^{n\times n_z}$, $C\in\mathbb{R}^{p\times n}$, $C_{z}\in\mathbb{R}^{n_z\times n}$. The matrix of additive uncertainty is represented as $\Psi^{*T}_1=
\begin{bmatrix}
\Lambda^*\Theta^{*T}_p&0&0
\end{bmatrix}$ and the measured output is $y$.

Based on the structure of the full plant model in (\ref{e:full_plant_full}) it can be noted that $\{A,B_2,C\}$ is a minimal realization given Assumptions \ref{a:minimal} and \ref{a:tzero}. In particular Assumption \ref{a:tzero} ensures that the inclusion of integral tracking (\ref{e:integral_error}) in the extended dynamics (\ref{e:full_plant_full}) preserves controllability given that $\{A_p,B_p,C_{pz},D_{pz}\}$ does not have a transmission zero at the origin \cite{Lavretsky2013}. Observability of the extended dynamics (\ref{e:full_plant_full}) is preserved (and may also be enhanced) with the inclusion of integral tracking (\ref{e:integral_error}) as the integral tracking state $x_e$ is measured. Furthermore it can be demonstrated that the inclusion of the filter dynamics with state $w_u$ in (\ref{e:full_plant_full}) maintains controllability and observability of the extended system, given the form of a low-pass filter in series with a controllable and observable plant with states $x_p$ and $x_e$.

Additionally, given Assumptions \ref{a:tzero} and \ref{a:rel1}, all transmission zeros of $\{A,B_2,C\}$ are stable and $\{A,B_2,C\}$ is uniform relative degree two. It can be noticed that $B_1$ can be spanned by a linear combination of $B_2$ and $AB_2$ as $B_1=\tau AB_2+B_2$. Furthermore $\Psi^*_1$ satisfies $\Psi^{*T}_1B_2=0$ ($\Psi^*_1$ is not in the same input path as the computed control input) and is bounded by a known value from Assumptions \ref{a:Theta_p_star} and \ref{a:Lambda}, i.e. $||\Psi^*_1||<\Psi_{max}=\Theta_{p,max}\Lambda_{max}$.

\section{Adaptive Control Design}\label{s:Adaptive_Control_Design}
This section presents an adaptive controller for the plant model of uniform relative degree two in equation (\ref{e:Plant_Simple}) whose structure is similar to that in \cite{Zheng2013,Qu2015,Qu2016,Qu2016a}. As will be shown, modifications in the underlying adaptive laws are however needed to account for the effects of input magnitude and rate saturation. Section \ref{s:Control_Architecture} presents the control architecture with gains designed in Section \ref{s:LRS}. The construction of an underlying error model that is strictly positive real is presented in Section \ref{s:SPR}.

\subsection{Control Architecture}\label{s:Control_Architecture}
As in any adaptive control design, we begin the control architecture discussion with the introduction of a reference model, which denotes the desired dynamics from the plant when there is no parametric uncertainty and no saturation disturbance. This corresponds to the plant model in (\ref{e:Plant_Simple}) with $\Lambda^*=I$, $\Psi_1^*=0$, and $\Delta u_2(t)=0$. A closed-loop term is added, as in recent investigations \cite{Lavretsky2012,Lavretsky2013,Wise2013,Wise_2018,Lavretsky_2019,Gibson2012,Gibson2013a,Gibson2013,Gibson2014}, which leads to a closed-loop reference model (CRM) in order to ensure smooth control inputs. Given that this problem is in output feedback, the reference model additionally serves the dual purpose as an observer \cite{Lavretsky2012,Lavretsky2013}. The use of $x_m$ to denote the state of the CRM as compared to the common observer notation of $\hat{x}$ is solely a notation choice, where both representations have appeared in the adaptive control literature. Based on the plant structure (\ref{e:Plant_Simple}) in Section \ref{s:Problem_Formulation}, a CRM may be chosen as
\begin{align}
\label{e:Reference_Model}
\begin{split}
\dot{x}_m(t)&=Ax_m(t)\hspace{-.01cm}+\hspace{-.01cm}B_2u_{bl}(t)\hspace{-.01cm}+\hspace{-.01cm}L(y(t)-y_m(t))\hspace{-.01cm}+\hspace{-.01cm}B_zz_{cmd}(t)\\
y_m(t)&=Cx_m(t)\\
z_m(t)&=C_zx_m(t)+D_{pz}u_{bl}(t)
\end{split}
\end{align}
where the design of $L$ is discussed in Section \ref{s:LRS}. The input $u_{bl}$ is chosen so as to ensure a stable CRM, and is of the form
\begin{equation}
\label{e:u_bl}
u_{bl}(t)=-Kx_m(t)
\end{equation}
where the matrix $K\in\mathbb{R}^{n\times m}$ can be designed using linear quadratic regulator (LQR) techniques \cite{Lavretsky2013} to provide for a desired reference model dynamics matrix $A_m=A-B_2K$. It can be noticed that in the presence of perfect output tracking of the reference model output ($y(t)-y_m(t)=0$), the CRM in (\ref{e:Reference_Model}) has the form of the standard adaptive control open loop reference model with $\dot{x}_m(t)=A_mx_m(t)+B_zz_{cmd}(t)$.

Given that the plant in (\ref{e:Plant_Simple}) contains parametric uncertainties, an adaptive control input will be used to counter these uncertainties. The difficulty however is that the introduction of the filter in (\ref{e:filter}) causes the underlying plant dynamics to have a relative degree of two, given that the model in (\ref{e:plant}) has a relative degree of one as noted in Assumption \ref{a:rel1}. This causes the corresponding error model, derived by subtracting (\ref{e:Reference_Model}) from (\ref{e:Plant_Simple}), to have a relative degree two property as well. The relative degree two property prohibits the use of a typical adaptive control input of the form of $u(t)=\Omega^T(t)\xi(x_m(t))$, where $\Omega$ is an estimate of unknown parameters and $\xi$ is the regressor. In order to provide for a strictly positive real (SPR) error model, an extra zero must be added as discussed in \cite{Narendra2005} Chapter 5. For this purpose, we now design the computed control input $u$ as \cite{Qu2016a}
\begin{equation}
\label{e:Control_Input}
u(t)=(a^1_1s+a^0_1)\Omega^T(t)\bar{\xi}(t)
\end{equation}
where the variables $a^1_1>0$ and $a^0_1>0$ can be selected as desired to place the extra zero. The adaptive parameter matrix $\Omega$ and regressor vector $\bar{\xi}$ are defined as
\begin{align}
\label{e:Omega_Xi_bar}
\begin{split}
\Omega(t)&=[\Lambda^T(t), \Psi_1^T(t), \Psi_2^T(t)]^T\\
\bar{\xi}(t)&=[\bar{u}_{bl}^T(t), -x_m^T(t), -\bar{x}_m^T(t)]^T
\end{split}
\end{align}
where $\Omega$ is an estimate of $\Omega^{*}=[\Lambda^{*-1}, \bar{\Psi}^{*T}_1, \bar{\Psi}^{*T}_2]^T$, and $\bar{\Psi}_1^{*}=
\begin{bmatrix}
\frac{\tau}{a_1^1}\Theta_p^{*T}&0&0
\end{bmatrix}^T$ and $\bar{\Psi}_2^{*}=
\begin{bmatrix}
\left(I-\tau\frac{a_1^0}{a_1^1}I\right)\Theta_p^{*T}&0&0
\end{bmatrix}^T$.
The following filtered signals are defined as
\begin{align}
\label{e:Filtered_Signals}
\begin{split}
\bar{u}_{bl}(t)&=\frac{1}{a_1^1s+a_1^0}u_{bl}(t)\\
\Delta\bar{u}_2(t)&=\frac{1}{a_1^1s+a_1^0}\Delta u_2(t)\\
\bar{x}_m(t)&=\frac{1}{a_1^1s+a_1^0}x_m(t)
\end{split}
\end{align}
Since the elements of $\bar{\xi}$ are filtered signals, it is easy to show that (\ref{e:Control_Input}) can be realized without explicit differentiation, provided the derivative of $\Omega$ can be directly synthesized. We will show in the following that this is indeed the case.

\subsection{The Design of $L$}\label{s:LRS}
We now address the design of $L$, which is accomplished so as to ensure an underlying transfer function to be SPR. Since we begin with a non-square plant, a lemma is needed to square-up $\{A,B_2,C\}$ in order to apply the KYP lemma (see \cite{Narendra2005} Lemma 2.5). See \cite{Qu2016a} for a proof and design procedure of the contents of Lemma \ref{l:Squareup}.
\begin{lemma}{\cite{Qu2016a}}
	\label{l:Squareup}
	For plant models of the form of (\ref{e:Plant_Simple}) which satisfy Assumptions \ref{a:minimal} to \ref{a:rel1}, there exists a matrix $B_{s1}\in\mathbb{R}^{n\times(p-m)}$ such that $\{A,\bar{B}_2,C\}$, where $\bar{B}_2=[B_2,B_{s1}]$, has stable transmission zeros and nonuniform input relative degree $r_i=2$ for $i=1,2,...,m$ and $r_i=1$ for $i=m+1,m+2,...,p.$
\end{lemma}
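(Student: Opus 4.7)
My plan is to construct $B_{s1}$ in two stages. First, I fix enough of the columns of $B_{s1}$ so that the augmented triple $\{A,\bar B_2,C\}$ has the prescribed input relative-degree list $(2,\ldots,2,1,\ldots,1)$. Second, I exploit the remaining free parameters in $B_{s1}$ to place the transmission zeros of $\{A,\bar B_2,C\}$ in the open left half plane.

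First I would transform $\{A,B_2,C\}$ into a Byrnes--Isidori-type normal form. Because Assumption~\ref{a:rel1} together with Proposition~\ref{p:Coord_Change} gives $\{A,B_2,C\}$ uniform input relative degree two (so $CB_2=0$ and $\operatorname{rank}(CAB_2)=m$), a state change of coordinates partitions $x$ into three blocks: $2m$ coordinates that carry the $m$ outputs ``aligned with $B_2$'' together with their derivatives; $p-m$ additional output coordinates not yet driven by $B_2$; and an internal state $\eta\in\mathbb{R}^{n-m-p}$ carrying the zero dynamics of the original non-square triple. Assumption~\ref{a:tzero} then guarantees that the matrix $A_{0}$ governing $\eta$ with all outputs held at zero is Hurwitz.

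Next I would impose the relative-degree constraints on $B_{s1}$ in these coordinates. For each of the $p-m$ new inputs to have relative degree one into one of the $p-m$ extra output channels, it suffices that the rows of $CB_{s1}$ acting on those channels form a nonsingular square block and that the rows acting on the original $m$ outputs vanish. This immediately yields $\operatorname{rank}[\,CAB_2,\;CB_{s1}\,]=p$, which is exactly the rank condition ii) in Definition~1 applied to $\bar B_2$, so the claimed relative-degree list follows. The rows of $B_{s1}$ that act on the derivative coordinates and on $\eta$ remain as free design parameters.

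The main obstacle is the transmission-zero placement, and this is where the free rows are used. By the Rosenbrock characterization, the invariant zeros of $\{A,\bar B_2,C\}$ are the eigenvalues of the map obtained by setting $y\equiv 0$ and solving for the inputs. The $y$-blocks uniquely determine the $p$ inputs through the now-nonsingular $CAB_2$ and $CB_{s1}$ blocks, and substituting back into the $\dot\eta$ equation yields a reduced linear system of the form $\dot\eta=(A_{0}+B_{0}F)\eta$, where $B_{0}$ captures the coupling between $\eta$ and the added output coordinates and $F$ is an affine function of the free rows of $B_{s1}$. The delicate step is verifying that $(A_{0},B_{0})$ is controllable (or at least stabilizable), which follows from the minimality of $\{A,B_2,C\}$ inherited from Assumption~\ref{a:minimal} through the extended-system construction given after equation~(\ref{e:full_plant_full}); pole placement then selects $F$ so that $A_{0}+B_{0}F$ is Hurwitz with any desired spectrum. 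Reverting the coordinate change recovers $B_{s1}$ in the original state coordinates, and by construction $\{A,\bar B_2,C\}$ has stable transmission zeros and the stated nonuniform input relative degree.
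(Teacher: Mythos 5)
The paper does not actually prove Lemma \ref{l:Squareup}; it explicitly defers both the proof and the design procedure to \cite{Qu2016a}, so there is no in-paper argument to compare against. Judged on its own terms, your proposal follows the right general template for squaring up a tall system (fix the rank structure of $CB_{s1}$ so that the augmented triple has the prescribed relative-degree list, then use the remaining rows of $B_{s1}$ to influence the zeros), but it contains two genuine problems, both located exactly at what you yourself identify as ``the delicate step.''

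First, the claim that Assumption \ref{a:tzero} makes the internal block $A_0\in\mathbb{R}^{(n-m-p)\times(n-m-p)}$ Hurwitz is false in general. For a tall system ($p>m$) the invariant zeros are the points where the $(n+p)\times(n+m)$ Rosenbrock pencil loses column rank; generically there are none, and in any case their number is typically far smaller than $n-m-p$. The eigenvalues of the coordinate-dependent block $A_0$ are not the transmission zeros of $\{A,B_2,C\}$, so Assumption \ref{a:tzero} says nothing about them. What is true, and what your argument misses structurally, is that the invariant zeros of the tall triple persist as invariant zeros of $\{A,\bar{B}_2,C\}$ for \emph{every} choice of $B_{s1}$ (extend a kernel vector of the tall Rosenbrock pencil by zeros in the new input slots); they therefore appear as fixed, unassignable modes of $A_0+B_0F$, and Assumption \ref{a:tzero} is needed precisely to guarantee that those fixed modes are stable. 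Second, the remaining modes are assignable only if the reduced pair $(A_0,B_0)$ is stabilizable, and this does not ``follow from minimality'' by inspection: the central technical content of the squaring-up result in \cite{Qu2016a} is showing, via a PBH-type argument, that every uncontrollable mode of $(A_0,B_0)$ must coincide with an invariant zero of the original tall system (hence be stable under Assumption \ref{a:tzero}), and your proposal asserts this without proof. Until that step is supplied, the argument is incomplete.
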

The matrix $B_{s1}$ is used only in the design of $L$ as follows:
\begin{equation}\label{e:B_part}
\bar{B}_2^1=[B_2^1,B_{s1}]
\end{equation}
\begin{equation}
\label{e:S}
S^T=(C\bar{B}_2^1)=[S_2^T,S_1^T]
\end{equation}
\begin{equation}
\bar{C}=SC
\end{equation}
\begin{equation}
R^{-1}(\varepsilon)=(\bar{C}\bar{B}_2^1)^{-1}\left[\bar{C}A\bar{B}_2^1+(\bar{C}A\bar{B}_2^1)^T\right](\bar{C}\bar{B}_2^1)^{-1}+\varepsilon I
\end{equation}
\begin{equation}
\label{e:varepsilon}
\varepsilon>\varepsilon_{max}(A,B,\bar{C},a_1^1,a_1^0,\Lambda_{max},\Psi_{max})
\end{equation}
\begin{equation}
\label{e:L}
L=\bar{B}_2^1R^{-1}(\varepsilon)S
\end{equation}
\begin{equation}
\label{e:A_L_Star}
A^*_L=(A+B_1\Psi^{*T}_1-LC).
\end{equation}
where $\varepsilon>\varepsilon_{max}$ is selected large enough to guarantee that $A_L^*$ is Hurwitz and the realization $\{A^*_L,B,SC\}$ is SPR. In summary, the closed-loop gain $L$ is chosen so as to guarantee that $\{A^*_L,B,SC\}$ is SPR. This property in turn will be used to derive the adaptive laws for adjusting $\Omega$, which is shown in the next section.

\subsection{SPR Error Model}\label{s:SPR}
This section will derive an error model and propose adaptive update laws from its SPR properties. The model tracking error is defined as: $e_x(t)=x(t)-x_m(t)$. Applying (\ref{e:Plant_Simple}), (\ref{e:Control_Input}), (\ref{e:Filtered_Signals}), (\ref{e:Reference_Model}) and (\ref{e:A_L_Star}), the following error model may be derived:
\begin{equation}
\label{e:e_x_Full}
\begin{alignedat}{2}
\dot{e}_x(t)&=&~&A^*_Le_x(t)+B_2^1\Lambda^*\bar{\Psi}^{*T}_1x_m(t)\\
&&&+B_2\Lambda^*(a^1_1s+a^0_1)\bar{\Psi}^{*T}_2\bar{x}_m(t)\\
&&&-B_2\Lambda^*(a^1_1s+a^0_1)\Lambda^{*-1}\bar{u}_{bl}(t)\\
&&&+B_2\Lambda^*(a^1_1s+a^0_1)\Omega^T(t)\bar{\xi}(t)\\
&&&+B_2\Lambda^*(a^1_1s+a^0_1)\Delta \bar{u}_2(t).
\end{alignedat}
\end{equation}
Given the undesirable differentiators in the right hand side of this equation, Proposition \ref{p:Coord_Change} may be applied with the modified model tracking error variable of the form
\begin{equation}
\label{e:e_mx_representation}
\begin{alignedat}{2}
e_{mx}(t)&=&~&e_x(t)-B_2\Lambda^*a_1^1\left[\bar{\Psi}^{*T}_2\bar{x}_m(t)+\Omega^T(t)\bar{\xi}(t)\right.\\
&&&\left.-\Lambda^{*-1}\bar{u}_{bl}(t)+\Delta \bar{u}_2(t)\right].
\end{alignedat}
\end{equation}
This results in a modified model tracking error model as
\begin{align}
\label{e:e_mx}
\begin{split}
\dot{e}_{mx}(t)&=A^*_Le_{mx}(t)+B_2^1\Lambda^*\tilde{\Omega}^T(t)\bar{\xi}(t)+ B_2^1\Lambda^*\Delta \bar{u}_2(t)\\
e_y(t)&=Ce_{mx}(t)=Ce_x(t)
\end{split}
\end{align}
where $\tilde{\Omega}(t)=\Omega(t)-\Omega^*$.

It should be noted that equation (\ref{e:e_mx}) cannot be used directly to determine the rules for adjusting the adaptive parameters $\Omega(t)$. Unlike the approaches in \cite{Zheng2013,Qu2015,Qu2016,Qu2016a}, there are additional terms $\Delta \bar{u}_2(t)$ present here due to the presence of magnitude and rate saturation. This introduces a significant departure from the procedure in \cite{Zheng2013,Qu2015,Qu2016,Qu2016a} and requires additional tools. It should also be noted that the methods proposed in \cite{karason1994,Annaswamy_1995,Schwager2005,Jang2009,Lavretsky2007,Serrani2009} are not sufficient in determining adaptive laws for the stability of the overall system. How these difficulties are overcome in this MIMO setting correspond to the main contributions of this paper and are elaborated on below.

In order to eliminate the effects of $\Delta\bar{u}_2$, an implementable auxiliary signal $e_{\Delta}$ is introduced as
\begin{align}
\label{e:e_auxiliary}
\begin{split}
\dot{e}_{\Delta}(t)&=A_Le_{\Delta}(t)+B_2^1\Omega_{\Delta}^T(t)\bar{\xi}_{\Delta}(t)\\
e_{y,\Delta}(t)&=Ce_{\Delta}(t).
\end{split}
\end{align}
The matrix $\Omega_{\Delta}$ and vector $\bar{\xi}_{\Delta}$ are
\begin{align}
\label{e:Omega_Xi_bar_Delta}
\begin{split}
\Omega_{\Delta}(t)&=[\hat{\Lambda}^T(t), \hat{\Psi}_1^T(t), \hat{\Psi}_2^T(t)]^T\\
\bar{\xi}_{\Delta}(t)&=[\Delta\bar{u}_2^T(t), e_{\Delta}^T(t), \bar{e}_{\Delta}^T(t)]^T
\end{split}
\end{align}
where $\Omega_{\Delta}$ is an estimate of $\Omega_{\Delta}^{*}=[\Lambda^*, \Lambda^*\bar{\Psi}^{*T}_1, \Lambda^*\bar{\Psi}^{*T}_2]^T$, and where the following filtered signal is defined:
\begin{equation}\label{e:e_delta_bar}
\bar{e}_{\Delta}(t)=\frac{1}{a_1^1s+a_1^0}e_{\Delta}(t).
\end{equation}
The following is an equivalent representation of (\ref{e:e_auxiliary}), which is obtained using (\ref{e:A_L_Star}) and (\ref{e:e_delta_bar}):
\begin{equation}
\begin{alignedat}{2}
\dot{e}_{\Delta}(t)&=&~&A^*_Le_{\Delta}(t)-B_2^1\Lambda^*\bar{\Psi}_1^{*T}e_{\Delta}(t)\\
&&&-B_2(a_1^1s+a_1^0)\Lambda^*\bar{\Psi}_2^{*T}\bar{e}_{\Delta}(t)+B_2^1\Omega_{\Delta}^T(t)\bar{\xi}_{\Delta}(t).
\end{alignedat}
\end{equation}
To eliminate the differentiator on the right hand side of this equation, Proposition \ref{p:Coord_Change} can be once again applied with a modified auxiliary signal of the form
\begin{equation}
e_{m\Delta}(t)=e_{\Delta}(t)+B_2a_1^1[\Lambda^*\bar{\Psi}_2^{*T}\bar{e}_{\Delta}(t)].
\end{equation}
This results in modified dynamics of the form
\begin{align}
\begin{split}
\dot{e}_{m\Delta}(t)&=A^*_Le_{m\Delta}(t)+B_2^1\tilde{\Omega}_{\Delta}^T(t)\bar{\xi}_{\Delta}(t)+B_2^1\Lambda^*\Delta\bar{u}_2(t)\\
e_{y,\Delta}(t)&=Ce_{m\Delta}(t)=Ce_{\Delta}(t)
\end{split}
\end{align}
where $\tilde{\Omega}_{\Delta}(t)=\Omega_{\Delta}(t)-\Omega_{\Delta}^*$.
To remove the effects of $\Delta\bar{u}_2$ the following modified augmented error ($e_{mu}(t)=e_{mx}(t)-e_{m\Delta}(t)$) model is introduced:
\begin{align}
\label{e:eu_dot}
\begin{split}
\dot{e}_{mu}(t)&=A^*_Le_{mu}(t)+B_2^{1}\Lambda^*\tilde{\Omega}(t)\bar{\xi}(t)-B_2^1\tilde{\Omega}_{\Delta}^T(t)\bar{\xi}_{\Delta}(t)\\
e_{y,u}(t)&=e_y(t)-e_{y,\Delta}(t)
\end{split}
\end{align}
We note that $e_{y,u}(t)$ is available at each $t$ since $e_y(t)$ is measurable and $e_{y,\Delta}(t)$ is a known signal that can be computed at each time $t$.
\begin{lemma}{\cite{Qu2016a}}
	\label{l:SPR}
	Given $L\in\mathbb{R}^{n\times p}$ in (\ref{e:L}) and $S\in\mathbb{R}^{p\times p}$ in (\ref{e:S}), $\varepsilon$ in (\ref{e:varepsilon}), and with Assumptions \ref{a:minimal} to \ref{a:Lambda}, for plant models of the form of (\ref{e:Plant_Simple}), the transfer function $\{A^*_L,\bar{B}_2^1,SC\}$  is strictly positive real.
\end{lemma}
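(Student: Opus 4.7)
The plan is to establish the SPR property by reducing to the MIMO Kalman--Yakubovich--Popov (KYP) lemma via an explicit construction of a Lyapunov certificate $P$. First, I would invoke Lemma \ref{l:Squareup} to replace the non-square triple $\{A, B_2, C\}$ with the squared-up $\{A, \bar{B}_2, C\}$, which has stable transmission zeros and relative degrees $r_i = 2$ for $i = 1,\dots,m$ and $r_i = 1$ for $i = m+1,\dots,p$. Applying Proposition \ref{p:Coord_Change} to the first $m$ channels absorbs one differentiator from the input side into the $B$-matrix via (\ref{e:B21}), yielding the realization $\{A, \bar{B}_2^1, C\}$ of uniform relative degree one. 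By the relative-degree-one property together with (\ref{e:S}), the matrix $C\bar{B}_2^1 = S^T$ is square and invertible, so $\bar{C}\bar{B}_2^1 = S S^T$ is symmetric positive definite, which is the high-frequency-gain condition needed for SPR.

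Next, I would apply the MIMO KYP lemma to $\{A_L^*, \bar{B}_2^1, SC\}$, which demands $P = P^T \succ 0$ and $Q = Q^T \succ 0$ satisfying $A_L^{*T} P + P A_L^* = -Q$ and $P \bar{B}_2^1 = C^T S^T$. I would choose coordinates adapted to the splitting $\mathbb{R}^n = \operatorname{range}(\bar{B}_2^1) \oplus V$, where $V$ is the complementary zero-dynamics subspace provided by the squaring-up. In this basis the boundary condition fixes the block of $P$ along $\operatorname{range}(\bar{B}_2^1)$, with feasibility guaranteed by $\bar{C}\bar{B}_2^1 = SS^T \succ 0$. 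The remaining block along $V$ is free and can be taken as the unique positive definite solution of a Lyapunov equation whose dynamics matrix is precisely the zero-dynamics matrix of $\{A,\bar{B}_2^1,\bar{C}\}$; Assumption \ref{a:tzero} together with Lemma \ref{l:Squareup} render this matrix Hurwitz, so an admissible $P \succ 0$ is obtained.

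Then I would substitute $L = \bar{B}_2^1 R^{-1}(\varepsilon) S$ from (\ref{e:L}) into the Lyapunov inequality. The term $-LC = -\bar{B}_2^1 R^{-1}(\varepsilon)\bar{C}$ contributes $-\bar{C}^T R^{-1}(\varepsilon)\bar{C}$ plus its transpose to the symmetric part of $A_L^{*T} P + P A_L^*$, and the definition of $R^{-1}$ is engineered so that the outer factors $(\bar{C}\bar{B}_2^1)^{-1} = (SS^T)^{-1}$ exactly cancel the indefinite drift contribution $\bar{C}A\bar{B}_2^1 + (\bar{C}A\bar{B}_2^1)^T$, leaving a tunable negative term proportional to $-\varepsilon I$ along $\operatorname{range}(\bar{B}_2^1)$. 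The parametric perturbation $B_1\Psi_1^{*T}$ does not enter the boundary condition $P\bar{B}_2^1 = \bar{C}^T$, which is purely structural; it only perturbs the Lyapunov inequality by $PB_1\Psi_1^{*T} + \Psi_1^* B_1^T P$, whose operator norm is uniformly bounded in terms of $\Psi_{max}$, $\|B_1\|$ and $\|P\|$ by Assumptions \ref{a:Theta_p_star}--\ref{a:Lambda}. Selecting $\varepsilon > \varepsilon_{max}$ as in (\ref{e:varepsilon}) makes the negative contribution dominate this perturbation, simultaneously producing $Q \succ 0$ and the Hurwitz property of $A_L^*$.

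The main obstacle will be making the coordinate decomposition fully rigorous and quantitatively explicit: exhibiting a single $P \succ 0$ whose positive definiteness and Lyapunov negativity are preserved \emph{uniformly} over all admissible uncertainties $\|\Psi_1^*\| < \Psi_{max}$ with one common $\varepsilon$. The stable-transmission-zeros conclusion from Lemma \ref{l:Squareup} is exactly what allows the zero-dynamics block of $P$ to be chosen as a Lyapunov matrix for a Hurwitz system, while the threshold $\varepsilon_{max}$ in (\ref{e:varepsilon}) is the quantitative expression of the robustness margin needed to absorb simultaneously the indefinite drift cross-term $\bar{C}A\bar{B}_2^1 + (\bar{C}A\bar{B}_2^1)^T$ and the worst-case parametric contribution from $B_1\Psi_1^{*T}$.
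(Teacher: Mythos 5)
Your outline is correct and follows essentially the same route as the source the paper relies on: the paper itself gives no proof of Lemma~\ref{l:SPR}, importing it from \cite{Qu2016a}, and the argument there is exactly the squaring-up of Lemma~\ref{l:Squareup} plus absorption of the differentiator into $\bar{B}_2^1$, the observation that $\bar{C}\bar{B}_2^1=SS^T\succ 0$, a KYP certificate with $P\bar{B}_2^1=C^TS^T$, and the choice of $L=\bar{B}_2^1R^{-1}(\varepsilon)S$ with $\varepsilon>\varepsilon_{max}$ to dominate both the drift term $\bar{C}A\bar{B}_2^1+(\bar{C}A\bar{B}_2^1)^T$ and the bounded perturbation $B_1\Psi_1^{*T}$. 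The one point to be careful about when you flesh this out is the one you already flag: $-\bar{C}^TR^{-1}(\varepsilon)\bar{C}$ is only negative semidefinite, so dominating the uncertainty uniformly over $\|\Psi_1^*\|<\Psi_{max}$ requires combining the $\varepsilon$-term with the strict negativity of the Hurwitz zero-dynamics block via a completion of squares, which is what makes $\varepsilon_{max}$ depend on $\Psi_{max}$ and $\Lambda_{max}$ in (\ref{e:varepsilon}).
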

Lemma \ref{l:SPR} also implies that $\{A^*_L,B_2^1,S_2C\}$ is SPR given the partitions in (\ref{e:B_part}) and (\ref{e:S}). Thus given the structure of the modified augmented error model in (\ref{e:eu_dot}), the adaptive parameters $\Omega$ and $\Omega_{\Delta}$ can be updated as
\begin{equation}
\label{e:Omega_dot}
\dot{\Omega}(t)=-\Gamma_{\Omega}\bar{\xi}(t)e_{y,u}^T(t)S_2^T
\end{equation}
\begin{equation}
\label{e:Omega_Delta_dot}
\dot{\Omega}_{\Delta}(t)=\Gamma_{\Omega_{\Delta}}\bar{\xi}_{\Delta}(t)e_{y,u}^T(t)S_2^T
\end{equation}
where $\Gamma_{\Omega}>0$ and $\Gamma_{\Omega_{\Delta}}>0$ are adaptive update gains.

A few comments regarding the two update laws are in order. Equation (\ref{e:Omega_dot}) represents a standard update to address parametric uncertainties in $\Omega^*$. Equation (\ref{e:Omega_Delta_dot}) represents an update due to the effects of saturation, since once the update law and $e_{\Delta}(0)$ are initialized at zero, they will only change when a saturation limit is reached. The adaptive update laws in (\ref{e:Omega_dot}) and (\ref{e:Omega_Delta_dot}) along with the fact that the realization $\{(A^*-LC,B_2^1,S_2C)\}$ is SPR provides the foundation for the closed-loop system stability. This is established in the next section.

\section{Stability Analysis}\label{s:Stability_Analysis}

Before proceeding to state the main stability result, a discussion of boundedness of the adaptive parameters is presented in Section \ref{s:BAP}. The boundedness of all remaining states is addressed in Section \ref{s:BoS}. It can be noted that due to the presence of saturation, not all signals can be tracked and thus results are local in nature. Bounds on the local stability results will be shown to be proportional to the level of saturation and uncertainty.

\subsection{Boundedness of Adaptive Parameters}\label{s:BAP}

We consider the following Lyapunov function candidate:
\begin{equation}
\label{e:Lyapunov}
\begin{alignedat}{2}
V(e_{mu}(t), \tilde{\Omega}(t), \tilde{\Omega}_{\Delta}(t))&=&~&e_{mu}^T(t)Pe_{mu}(t)\\
&&&\hspace{-.1cm}+Tr\left[\tilde{\Omega}^T(t)\Gamma^{-1}_{\Omega}\tilde{\Omega}(t)|\Lambda^*|\right]\\
&&&\hspace{-.1cm}+Tr\left[\tilde{\Omega}_{\Delta}^T(t)\Gamma^{-1}_{\Omega_{\Delta}}\tilde{\Omega}_{\Delta}(t)\right]
\end{alignedat}
\end{equation}
where $P=P^T>0$ is a positive definite matrix which can be used to guarantee the SPR properties of $\{(A^*_L,\bar{B}_2^1,SC)\}$ and satisfy
\begin{align}
\label{e:SPR}
\begin{split}
A^{*T}_LP+PA^*_L&=-Q<0\\
P\bar{B}_2^1&=C^TS^T
\end{split}
\end{align}
for a positive definite matrix $Q=Q^T>0$. The following partition may be used:
\begin{equation}
\label{e:Partition}
P[B_2^1,B_{s1}]=C^T[S_2^T,S_1^T].
\end{equation}
Taking a derivative of (\ref{e:Lyapunov}) with respect to time:
\begin{equation}
\begin{alignedat}{2}
\dot{V}(e_{mu}(t), \tilde{\Omega}(t), \tilde{\Omega}_{\Delta}(t))&=&~&e_{mu}^T(t)(A^{*T}_LP+PA^*_L)e_{mu}(t)\\
&&&+2e_{mu}^T(t)PB_2^1\Lambda^*\tilde{\Omega}(t)\bar{\xi}(t)\\
&&&-2e_{mu}^T(t)PB_2^1\tilde{\Omega}_{\Delta}(t)\bar{\xi}_{\Delta}(t)\\
&&&+2Tr\left[\tilde{\Omega}^T(t)\Gamma^{-1}_{\Omega}\dot{\Omega}(t)|\Lambda^*|\right]\\
&&&+2Tr\left[\tilde{\Omega}_{\Delta}^T(t)\Gamma^{-1}_{\Omega_{\Delta}}\dot{\Omega}_{\Delta}(t)\right].
\end{alignedat}
\end{equation}
Applying (\ref{e:Omega_dot}), (\ref{e:Omega_Delta_dot}), (\ref{e:SPR}), and (\ref{e:Partition}):
\begin{equation}
\dot{V}(e_{mu}(t), \tilde{\Omega}(t), \tilde{\Omega}_{\Delta}(t))=-e_{mu}^T(t)Qe_{mu}(t)\leq0.
\end{equation}
Thus all of $(e_{mu}, \tilde{\Omega}, \tilde{\Omega}_{\Delta})$ are bounded. This does not show that the model tracking error $e_x$ is bounded, however the following relation for the tracking error can be obtained:
\begin{equation*}
||e_x(t)||=\mathcal{O}[\sup_{\zeta\leq t}||\Delta u_2(\zeta)||].
\end{equation*}
Such a relation is similar to that derived in \cite{karason1994,Annaswamy_1995,Schwager2005,Jang2009,Lavretsky2007}, where the constants in the big $\mathcal{O}$ notation are system parameters.

\subsection{Boundedness of the State}\label{s:BoS}

For clarity of presentation, the constant matrices, constant scalars and time-varying scalars found in this section are defined in Appendix \ref{app:constants}. The following closed-loop system representation can be obtained through combining (\ref{e:u_r}), (\ref{e:Control_Deficiency}), (\ref{e:Reference_Model}), (\ref{e:u_bl}), (\ref{e:Control_Input}), (\ref{e:Omega_Xi_bar}), (\ref{e:Filtered_Signals}), and (\ref{e:e_mx}):
\begin{equation}
\label{e:Chi_full}
\begin{alignedat}{2}
\dot{\chi}(t)&=&~&A_{cl}\chi(t)-B_{\Omega}\tilde{\Omega}^{T}(t)C_{\bar{\xi}}\chi(t)\\
&&&+C_{\Delta \bar{u}_2}\frac{1}{a_1^1}\Delta u_2(t)+B_Zz_{cmd}(t)
\end{alignedat}
\end{equation}
where the full state of the system to be shown bounded is:
\begin{equation}
\label{e:chi}
\chi(t)=
\begin{bmatrix}
\bar{x}_m^T(t)&x_m^T(t)&e_{mx}^T(t)&\Delta \bar{u}_2^T(t)
\end{bmatrix}^T.
\end{equation}
It can be noted from the block upper triangular structure of $A_{cl}$ in Appendix \ref{app:constants}, that its eigenvalues can be placed arbitrarily stable. This is due to the controllability properties of the underlying dynamics and the use of a closed-loop reference model (\ref{e:Reference_Model}). Showing boundedness of the state $\chi$ will result in boundedness of all of the signals in the closed-loop system given that $e_{mu}$, $\tilde{\Omega}$, and $\tilde{\Omega}_{\Delta}$ were shown to be bounded in Section \ref{s:BAP}. The main challenges that arise here are due to the various combinations of scenarios of magnitude and rate saturations that can occur, which causes $\Delta \bar{u}_2$ to be non-zero. This in turn necessitates the use of multiple sub-cases that are discussed in the proof of Theorem \ref{t:One} in Appendix \ref{app:proof}. Before proceeding to a discussion of the main stability theorem, given that the system is input constrained, the following reference command bound is required.
\begin{assumption}\label{a:z}
The reference command $z_{cmd}$ is bounded as $||z_{cmd}(t)||\leq z_{cmd,max}$ and is chosen such that $\rho \chi_{min}<\chi_{max}$, $\kappa_3>0$, $\kappa_8>0$, $\kappa_5^2>4\kappa_4\kappa_6$, $\kappa_5^2>4\kappa_9\kappa_{10}$.
\end{assumption}
Assumption \ref{a:z} implies that the magnitude of the reference command is upper bounded with respect to the level of saturation and system uncertainty. The larger the level of saturation and smaller the system uncertainty, the larger the allowable command. The bound $\rho \chi_{min}<\chi_{max}$ can always be achieved for a provably bounded command, as for example if $z_{cmd,max}=0$: $\chi_{min}=0$ and $\chi_{max}$ is finite, with $\kappa_3>0,\kappa_8>0$ and $\kappa_5^2>4\kappa_4\kappa_6$, $\kappa_5^2>4\kappa_9\kappa_{10}$. Thus $\chi_{min}$ can be seen to represent a shift in the equilibrium of the system due to tracking of $z_{cmd}$. We now state the main theorem of stability.

\begin{theorem}
	\label{t:One}
	Under Assumption \ref{a:z} for the system in (\ref{e:Plant_Simple}) and (\ref{e:Reference_Model}), control input (\ref{e:Control_Input}), known control disturbances (\ref{e:Control_Deficiency}), adaptive laws in (\ref{e:Omega_dot}), (\ref{e:Omega_Delta_dot}), and Lyapunov function (\ref{e:Lyapunov}), the closed-loop system has bounded trajectories for all $t\geq0$ if the following two conditions are satisfied: 
	\begin{enumerate}[label=(\roman*)]
		\item $|\chi(0)|<\frac{1}{\rho}\chi_{max}$
		\item $\sqrt{V(0)}<\sqrt{\frac{\lambda_{min}}{\gamma_{max}}}\bar{\Omega}_{max}$
	\end{enumerate}
	
	Further
	
	$$||\chi(t)||<\chi_{max},\quad\forall t>0$$
	And
	$$||e_x(t)||= \mathcal{O}[\sup_{\zeta\leq t}||\Delta u_2(\zeta)||]$$
\end{theorem}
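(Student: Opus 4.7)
The plan is to establish boundedness of $\chi(t)$ in (\ref{e:chi}) via an invariant-set / contradiction argument applied to the closed-loop equation (\ref{e:Chi_full}), leveraging the uniform bounds on the adaptive parameters already obtained in Section \ref{s:BAP}. First, from the non-increasing property $\dot V \leq 0$ established in Section \ref{s:BAP} together with condition (ii), $\sqrt{V(0)} < \sqrt{\lambda_{min}/\gamma_{max}}\,\bar{\Omega}_{max}$, I would extract uniform a priori bounds $\|\tilde{\Omega}(t)\| \leq \bar{\Omega}_{max}$ and $\|\tilde{\Omega}_{\Delta}(t)\| \leq \bar{\Omega}_{\Delta,max}$ for every $t \geq 0$. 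Combined with condition (i), $\|\chi(0)\| < \chi_{max}/\rho$, I define $t^* = \inf\{t \geq 0 : \|\chi(t)\| = \chi_{max}\}$ and aim to show $t^* = \infty$ by proving that the Lyapunov derivative of an auxiliary function of $\chi$ is strictly negative on the boundary $\|\chi\|=\chi_{max}$.

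Next, I would bound the saturation perturbation $\|\Delta u_2(t)\|$ in terms of $\|\chi(t)\|$ on the interval $[0,t^*)$. Because $u$ in (\ref{e:Control_Input}) is a linear combination of the components of $\chi$ (filtered through $\bar{u}_{bl}$, $\bar{x}_m$) multiplied by the uniformly bounded adaptive matrix $\Omega$, and $u_r$ in (\ref{e:u_r}) is a scaled difference of $E_s(u,u_{max})$ and $u_p$, each saturation sub-case (no saturation, magnitude only, rate only, both) yields $\|E_s(v,v_{\max})-v\|$ either zero or bounded by $\|v\|$, so on the annulus we obtain a uniform bound $\|\Delta u_2(t)\| \leq \alpha_1\|\chi(t)\| + \alpha_2$. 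The constants $\alpha_1,\alpha_2$ depend on $u_{max},u_{r,max},\tau,\bar{\Omega}_{max},\Lambda_{max}$, and their precise form is absorbed into the $\kappa_i$ of Appendix A.

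With these preparations, I would select $P_\chi = P_\chi^T > 0$ solving $A_{cl}^T P_\chi + P_\chi A_{cl} = -Q_\chi < 0$, which exists because $A_{cl}$ is Hurwitz by the block-upper-triangular structure and CRM gain design discussed after (\ref{e:Chi_full}). Differentiating $W(\chi)=\chi^T P_\chi \chi$ along (\ref{e:Chi_full}), substituting the bounds on $\tilde{\Omega}$, $\Delta u_2$, and $z_{cmd}$, and grouping terms gives a scalar inequality
\begin{equation*}
\dot{W}(\chi(t)) \;\leq\; -(\kappa_3-\kappa_4)\|\chi(t)\|^2 \;+\; \kappa_5\|\chi(t)\| \;-\; \kappa_6
\end{equation*}
on the boundary, with a companion inequality governed by $(\kappa_8,\kappa_9,\kappa_{10})$ accounting for the $\Delta\bar u_2$-driven coupling. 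The discriminant conditions $\kappa_5^2 > 4\kappa_4\kappa_6$ and $\kappa_5^2 > 4\kappa_9\kappa_{10}$ in Assumption \ref{a:z} ensure that the associated quadratics in $\|\chi\|$ have real roots $\chi_{min} < \chi_{max}$, while $\kappa_3>0,\kappa_8>0$ and $\rho\chi_{min} < \chi_{max}$ guarantee that $\dot W(\chi)<0$ on the annulus $\rho\chi_{min} \leq \|\chi\| \leq \chi_{max}$. A standard comparison / invariant-set argument then rules out $\|\chi(t^*)\|=\chi_{max}$ and yields $\|\chi(t)\|<\chi_{max}$ for all $t \geq 0$. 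The tracking-error bound $\|e_x(t)\| = \mathcal{O}[\sup_{\zeta \leq t}\|\Delta u_2(\zeta)\|]$ follows from (\ref{e:e_mx_representation}), which writes $e_x$ as $e_{mx}$ plus $a_1^1 B_2\Lambda^*$ times a linear combination of the now-bounded filtered signals and $\Delta\bar u_2$, both of which vanish with $\Delta u_2$.

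The main obstacle will be the bookkeeping of the saturation case analysis: each of the four sub-cases for $(E_s(u,u_{max}),E_s(u_r,u_{r,max}))$ yields a slightly different linear envelope for $\|\Delta u_2\|$ in terms of $\|\chi\|$, and these must be combined conservatively so that a single set of constants $\kappa_3,\ldots,\kappa_{10}$ controls $\dot W$ uniformly while still satisfying the positivity and discriminant conditions of Assumption \ref{a:z}. The delicate point is ensuring that the coefficient of $\|\chi\|^2$ in $\dot W$ remains strictly negative after absorbing the $\Delta u_2$ cross-term in the worst case (both saturations active simultaneously), since this is what closes the loop between the a priori adaptive-parameter bounds from Section \ref{s:BAP} and the state bound asserted by the theorem.
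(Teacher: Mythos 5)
Your skeleton matches the paper's: uniform bounds on $\tilde{\Omega}$, $\tilde{\Omega}_{\Delta}$ from Section \ref{s:BAP} and condition \textit{(ii)}, a quadratic Lyapunov function $W(\chi)=\chi^TP_{cl}\chi$ built on the Hurwitz block-upper-triangular $A_{cl}$, and a level-set/annulus argument anchored by condition \textit{(i)} and $\rho\chi_{min}<\chi_{max}$. The final step recovering $\|e_x\|=\mathcal{O}[\sup_{\zeta\le t}\|\Delta u_2(\zeta)\|]$ from (\ref{e:e_mx_representation}) is also as in the paper. However, there is a genuine gap in how you treat the saturation disturbance. You propose to absorb it via a uniform envelope $\|\Delta u_2(t)\|\le\alpha_1\|\chi(t)\|+\alpha_2$. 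First, this envelope is not even of the right order: $u=(a_1^1s+a_1^0)\Omega^T\bar{\xi}$ contains the term $a_1^1\dot{\Omega}^T\bar{\xi}$, and with the update law (\ref{e:Omega_dot}) this contributes a term quadratic in $\|\chi\|$ to $\|u\|$ (see (\ref{e:norm_u})), which is why the paper's $\dot W$ estimates carry cubic terms $\kappa_1\|\chi\|^3$, $\kappa_4\|\chi\|^3$, $\kappa_9\|\chi\|^3$ and why an \emph{upper} cutoff $\chi_{max}$ is needed at all. Second, and more fundamentally, substituting any worst-case norm bound on $\Delta u_2$ into the cross term $2\chi^TP_{cl}C_{\Delta\bar u_2}\frac{1}{a_1^1}\Delta u_2$ injects a positive $\|\chi\|^2$ contribution of order $2P_C\frac{1}{a_1^1}\|K_{\xi,\Omega}\|$ (the full feedback gain), and nothing in Assumption \ref{a:z} or conditions \textit{(i)}--\textit{(ii)} forces $q_0$ to dominate it: the bound $\bar{\Omega}_{max}=q_0/(3P_B\|C_{\bar\xi}\|)$ only controls the adaptive-uncertainty cross term, not the saturation cross term. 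Your claimed inequality $\dot W\le-(\kappa_3-\kappa_4)\|\chi\|^2+\kappa_5\|\chi\|-\kappa_6$ therefore cannot be derived by this route, and the discriminant conditions of Assumption \ref{a:z} would not close the argument.

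The missing idea is the sign-dependent sub-case decomposition of the scalar $2\chi^TP_{cl}C_{\Delta\bar u_2}\frac{1}{a_1^1}\mathbb{U}u$ (and its rate analogue with $\mathbb{U}_r\tau u_r$) used in Appendix \ref{app:proof}. Writing the disturbance as $(\mathbb{U}-1)u$ with $\mathbb{U}=\|E_s(u,u_{max})\|/\|u\|\le 1$ exploits the fact that the saturation deficiency points opposite to $u$: when the cross term is positive (sub-case \textit{iii}) the deficiency is strictly helpful and drops out; when it is strongly negative (sub-case \textit{i}) the guaranteed saturated magnitude supplies a stabilizing linear term $-\alpha u_{min}\|\chi\|$, which is exactly where $\kappa_3>0$ and $\kappa_8>0$ from Assumption \ref{a:z} enter; and in the intermediate regime (sub-case \textit{ii}) the specific choices $\alpha=P_B\tilde{\Omega}_{max}\|C_{\bar\xi}\|/\|K_{\xi,\Omega}\|$ and $\beta=\tau P_B\tilde{\Omega}_{max}\|C_{\bar\xi}\|/(\|K_{\xi,\Omega}\|+\|K_{u_p}\|)$ shrink the feedback-gain contribution to $P_B\tilde{\Omega}_{max}\|C_{\bar\xi}\|$, preserving $\kappa_5=q_0-3\tilde{\Omega}_{max}P_B\|C_{\bar\xi}\|>0$. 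Without this case analysis the coefficient of $\|\chi\|^2$ cannot be kept negative, so your proof as written does not go through.
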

\begin{proof}
Given that the dynamics matrix $A_{cl}$ can be chosen by design to be strictly stable as in Appendix \ref{app:constants}, it satisfies the following:
$$A_{cl}^TP_{cl}+P_{cl}A_{cl}<-Q_{cl}$$
where the matrix $P_{cl}$ may be computed using a positive definite matrix $Q_{cl}$. Define the following candidate Lyapunov function of the closed-loop dynamics:
\begin{equation}
\label{e:Lyap}
W(\chi(t))=\chi^T(t)P_{cl}\chi(t).
\end{equation}
Define the following level set, $\mathcal{B}$, of $W$:
\begin{equation}\label{e:B}
\mathcal{B}:\left\{\chi(t)|W(\chi(t))=p_{min}\chi_{max}^2\right\}.
\end{equation}
The following annulus region is defined:
\begin{equation}
\mathcal{A}:\left\{\chi(t)|\chi_{min}\leq ||\chi(t)||\leq \chi_{max}\right\}.
\end{equation}
The proof of boundedness of the full system state follows from two steps. The first step will show that $\mathcal{B}\subset\mathcal{A}$ using condition \textit{(ii)} of Theorem 1. Step 2 will show that $\dot{W}(\chi(t))<0, \forall \chi\in \mathcal{A}$. Condition \textit{(i)} in Theorem 1 implies the following:
\begin{equation}\label{e:lesslyap}
W(\chi(0))<W(\mathcal{B}).
\end{equation}
The two steps thus result in the following:
\begin{equation}\label{e:lesslyap2}
W(\chi(t))\leq W(\chi(0))\quad\forall t\geq0.
\end{equation}
Theorem 1 follows from these two steps.

\textit{Proof Step 1:} This step shows that $\mathcal{B}\subset\mathcal{A}$. Using condition \textit{(ii)} from Theorem 1, it can be seen that $\tilde{\Omega}_{max}<\bar{\Omega}_{max}$. Additionally by Assumption \ref{a:z}:
$$\rho \chi_{min}<\chi_{max}.$$
Equation (\ref{e:Lyap}) can be used to show that $W(\chi(t))$ is bounded from below by $p_{min}||\chi(t)||^2\leq W(\chi(t))$. This implies the following:
$$||\chi(t)||\leq \chi_{max}\quad\forall \chi\in \mathcal{B}.$$
In a similar manner from equation (\ref{e:Lyap}), $W(\chi(t))$ can be bounded from above by $W(\chi(t))\leq p_{max}||\chi(t)||^2$. These relations imply the following:
$$\chi_{min}<\frac{1}{\rho}\chi_{max}\leq||\chi(t)||, \quad \forall \chi\in \mathcal{B}.$$
The definition of the annulus region $\mathcal{A}$ can then be used to conclude that $\mathcal{B}\subset\mathcal{A}$.

\textit{Proof Step 2:} It can then be shown that $\dot{W}(\chi(t))< 0$ $\forall \chi\in\mathcal{A}$. Three cases are considered. Case 1 considers the system not in magnitude nor rate saturation ($\Delta u=0,\Delta u_r=0$). Case 2 considers the system in magnitude saturation ($\Delta u\neq0,\Delta u_r=0$) and case 3 considers the system in rate saturation ($\Delta u=0,\Delta u_r\neq0$). The proof of this step is located in Appendix \ref{app:proof}.

Boundedness of the state $\chi$ in equation (\ref{e:chi}) as well as boundedness of $e_{mu}$, $\tilde{\Omega}$, and $\tilde{\Omega}_{\Delta}$ in Section \ref{s:BAP} is sufficient to show that all of the signals in the closed-loop system remain bounded, thus concluding the proof of Theorem \ref{t:One}.
\end{proof}

\begin{remark}
	Global stability cannot be achieved in the presence of magnitude and rate saturation for general (possibly open loop unstable) plant models. There always exists initial conditions that will cause the system to have unbounded trajectories regardless of the design of the controller. The results are thus dependent on formulations of regions of attraction, dependent on the saturation levels and amount of uncertainty, such as those presented in Theorem \ref{t:One}.
\end{remark}
\begin{remark}
	If the plant dynamics are open-loop stable, equation (\ref{e:plant}) is bounded-input, bounded-output (BIBO) stable and equations (\ref{e:u_r}) and (\ref{e:dot_u_p}) results in a bounded control input. Thus the state trajectory $\chi$ is bounded regardless of initial condition.
\end{remark}
\begin{remark}
	This architecture incorporates the effects of saturation in the auxiliary signal in (\ref{e:e_auxiliary}). This auxiliary signal changes in the presence of saturation and thus alters the control input through the adaptive update laws. The effect of saturation may also be incorporated in the reference model as done in the $\mu$-mod architecture \cite{Lavretsky2007}. A similar error model can be formulated for the $\mu$-mod architecture, thus the MIMO output feedback magnitude and rate saturation architecture presented this paper can be extended to the $\mu$-mod architecture.
\end{remark}

\section{Flight Control Numerical Simulations}
\label{s:Simulations}

This section presents numerical simulation results for an open loop stable nonlinear F-16 vehicle model for single-input-single-output (SISO) longitudinal dynamics and multiple-input-multiple-output (MIMO) lateral-directional dynamics, as well a numerical simulation of a nonlinear hypersonic vehicle model for open loop unstable longitudinal dynamics. For the F-16 vehicle, the proposed adaptive controller maintains stability in the presence of input saturation and demonstrates increased performance compared to a non-adaptive controller. The benefit of the proposed adaptive controller is even more apparent for the open loop unstable hypersonic vehicle dynamics, where the proposed adaptive controller provides for command tracking, whereas non-adaptive and standard adaptive responses are unacceptable.

\subsection{F-16 Numerical Simulation}
\label{ss:F16}
This section applies the adaptive controller with limiter presented in this paper to two nonlinear F-16 simulations adapted from \cite{Stevens2003,Russell_2003,Nguyen_1979}. The aircraft simulations feature nonlinear equations of motion with aerodynamic forces and moments calculated using aerodynamic parameters scheduled from aerodynamic look-up tables based on the flight condition. A trim point for this nonlinear vehicle model was obtained at a straight and level flying condition at a velocity of $500$ ft/s with an altitude of $15,000$ ft. The vehicle model was linearized about this trim point in order to obtain linearized dynamics for control design as in (\ref{e:plant}). The actuator time constants and saturation levels from the simulation documentation \cite{Russell_2003} were employed in this simulation, thus providing for physically consistent input constraints. In the following numerical simulations, the thrust input to the aircraft was held constant (at the trim value). It is common in the aerospace industry to separate the design of the full control system of fighter aircraft into longitudinal and lateral-directional dynamics \cite{Lavretsky2013}.

The longitudinal dynamics of an aircraft describe the short timescale motion in the pitch plane. The longitudinal variables for control design are given by
\begin{equation*}
x_p=
\begin{bmatrix}
\alpha&q
\end{bmatrix}^T
\>~
u_p=\delta_e
\>~
y_p=q
\>~
z_p=q
\end{equation*}
where the longitudinal state is composed of the vehicle's angle of attack $\alpha$ and pitch rate $q$. Pitch rate is both the measured and regulated variable. An elevator deflection $\delta_e$ represents the input to the dynamics. The elevator gain parameter $\tau$ and saturation levels are as follows:
\begin{equation*}
\tau=0.0495
\>~
u_{max}=\pm 25~\text{deg}
\>~
u_{r,max}=\pm 60~\text{dps}.
\end{equation*}

The lateral-directional dynamics govern motion in the roll and yaw axes and were simulated to demonstrate the MIMO case. The lateral-directional subsystem is given by
\begin{equation*}
x_p=
\begin{bmatrix}
\beta&p&r
\end{bmatrix}^T
\>~
u_p=
\begin{bmatrix}
\delta_a&\delta_r
\end{bmatrix}^T
\>~
y_p=
\begin{bmatrix}
p&r
\end{bmatrix}^T
\>~
z_p=p
\end{equation*}
where the lateral-directional state is composed of the vehicle's angle of sideslip $\beta$, roll rate $p$, and yaw rate $r$. Roll rate and yaw rate are both measured while roll rate is the regulated variable. Aileron $\delta_a$ and rudder $\delta_r$ deflections represents the inputs to the dynamics. The aileron gain parameter $\tau$ and saturation levels are as follows:
\begin{equation*}
\tau=0.0495
\>~
u_{max}=\pm 21.5~\text{deg}
\>~
u_{r,max}=\pm 80~\text{dps}.
\end{equation*}
The rudder gain parameter $\tau$ and saturation levels are as follows:
\begin{equation*}
\tau=0.0495
\>~
u_{max}=\pm 30~\text{deg}
\>~
u_{r,max}=\pm 120~\text{dps}.
\end{equation*}

Separate controllers were designed for each of the two subsystems, longitudinal and lateral-directional. Integral command tracking (\ref{e:integral_error}) was included for both subsystems in addition to the explicit inclusion of the dynamics of each magnitude and rate saturated actuator as in (\ref{e:filter}), resulting in complete plant models as in (\ref{e:full_plant_full}), (\ref{e:Plant_Simple}). The plant models for each subsystem satisfy Assumptions \ref{a:minimal}-\ref{a:rel1}, as required for the adaptive control design in this paper.

The first numerical simulation in Figures \ref{f:state_2} and \ref{f:control_1} show the longitudinal dynamics of the F-16 aircraft in the presence of a $75\%$ decrease in the total pitching moment coefficient $C_m$. In this simulation, the pitch rate command is stepped every two seconds. The significant amount of uncertainty in $C_m$, results in large degradation of performance for the non-adaptive baseline-only controller (\ref{e:u_bl}), where it can be seen in Figure \ref{f:state_2} that the baseline controller does not settle before the step command changes. This is prohibitive in the design of guidance controllers for aircraft which rely on step responses close to the desired response. This motivates the use of an adaptive control input in order to recover the desired performance, even in the presence of uncertainty.

To this end, the adaptive control input presented in this paper was designed as in (\ref{e:Control_Input}). In order to counter the uncertainty and recover the desired performance, the adaptive controller is seen to require large control rates, which surpass $200$ dps and violate the physical actuator limits as seen in Figure \ref{f:control_1}. If the input limits are enforced for the adaptive control input as in \cite{Zheng2013,Qu2015,Qu2016,Qu2016a}, which does not include the modified error model (\ref{e:eu_dot}) and adaptive update law (\ref{e:Omega_Delta_dot}), the closed loop system can be seen to be unstable. This is the motivation for the explicit accounting for saturation nonlinearities in the design of the adaptive controller presented in this paper. Figure \ref{f:state_2} shows a stable response for the proposed adaptive controller using the modified error model (\ref{e:eu_dot}) and saturation update law (\ref{e:Omega_Delta_dot}), as described in this paper. In particular, the adaptive controller presented in this paper has a significantly improved pitch rate response as compared to the baseline controller. This improved response is due to the faster response of the control input as seen by the input rate profile in Figure \ref{f:control_1}. The adaptive controller proposed in this paper allows for significant performance recovery in the presence of the input limits. Figure \ref{f:long_error} shows the integrated absolute value of the difference in pitch rate responses between the ideal not-limited adaptive control response and each of the baseline response and limited adaptive control response. In this figure the adaptive controller can be seen to provide for approximately half the integrated error. It should be noted that some accumulated error is expected given the constraints on the control input. Stability is maintained in the adaptive controller presented in this paper by explicitly taking into account extra disturbance due to rate saturation as seen in Figure \ref{f:long_error}.

\begin{figure}[!t]
	\centering
	\includegraphics[trim={1.7cm 3.8cm 1.9cm 4.1cm},clip,width=\linewidth]{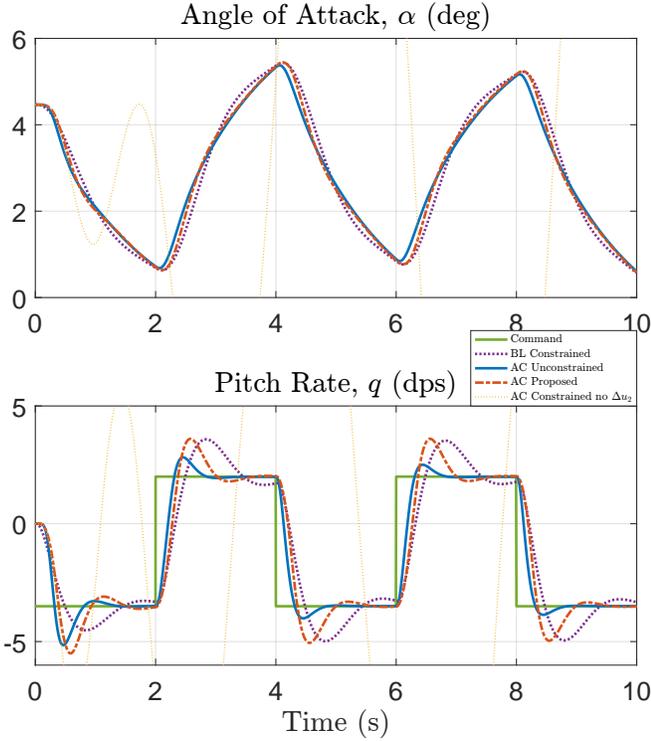}
	\caption{F-16 aircraft. Longitudinal state response comparison between baseline only control (BL Constrained), unconstrained adaptive control (AC Unconstrained), and constrained adaptive control with (AC Proposed) and without (AC Constrained no $\Delta u_2$) update law saturation modification.}
	\label{f:state_2}
\end{figure}
\begin{figure}[!h]
	\centering
	\includegraphics[trim={1.7cm 3.8cm 1.9cm 4.1cm},clip,width=\linewidth]{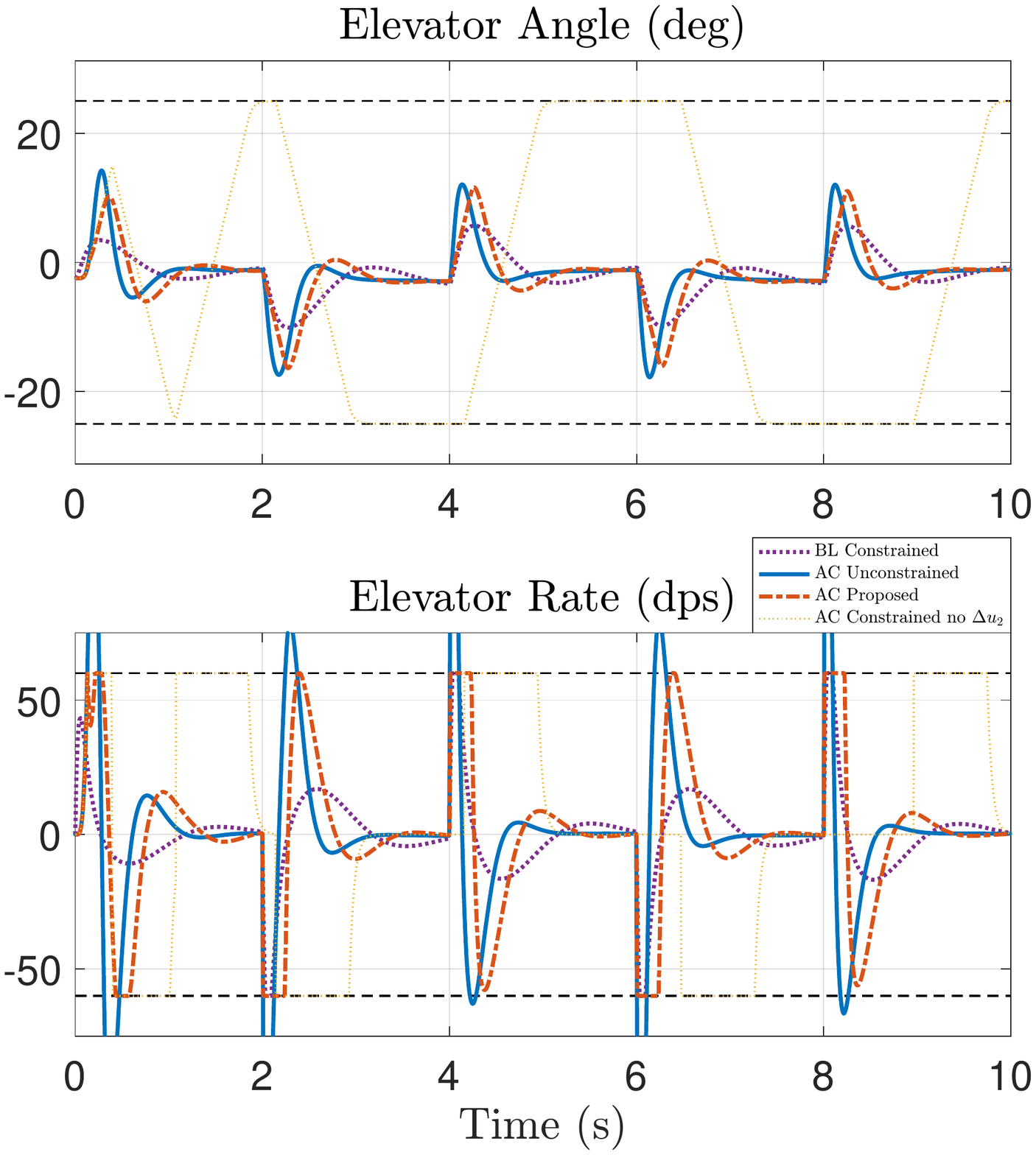}
	\caption{F-16 aircraft. Longitudinal control magnitude and rate response comparison between baseline only control (BL Constrained), unconstrained adaptive control (AC Unconstrained), and constrained adaptive control with (AC Proposed) and without (AC Constrained no $\Delta u_2$) update law saturation modification.}
	\label{f:control_1}
\end{figure}
\begin{figure}[!h]
	\centering
	\includegraphics[trim={2.0cm 9cm 2cm 9cm},clip,width=\linewidth]{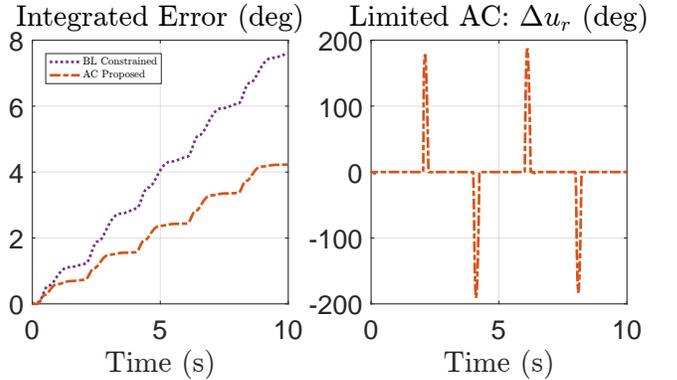}
	\caption{F-16 aircraft. Left: Integral of the absolute value of the error between the ideal (unconstrained) adaptive control pitch rate response and the constrained baseline/proposed adaptive responses of Figure \ref{f:state_2}. Right: Time history of rate saturation elevator disturbance (\ref{e:Control_Deficiency}) for the proposed constrained adaptive controller with saturation modification.}
	\label{f:long_error}
\end{figure}

In order to demonstrate the effectiveness of the proposed adaptive controller for a MIMO system, with both magnitude and rate limits encountered, similar simulations were carried out for the lateral-directional subsystem of the numerical nonlinear F-16 simulation in Figures \ref{f:state_3} and \ref{f:control_2}. In this simulation the aircraft experiences a $50\%$ reduction in rolling moment coefficient $C_{\ell}$. The uncertainty results in a degradation in the performance of the roll rate response for the non-adaptive baseline controller as compared to the ideal adaptive controller for the plant model without input limits, as seen in Figure \ref{f:latr_error}. The adaptive controller designed without the error model and update laws modified for the presence of input rate and magnitude limits can be seen to have a severely degraded response. In comparison, the proposed adaptive controller presented in this paper more closely recovers the ideal unconstrained response (up to the performance limits due to the input magnitude and rate constraints). In this simulation the effects of rate limits is even more apparent, where in Figure \ref{f:control_2} it can be more clearly noticed that the limited adaptive control aileron angle response evolves linearly when an aileron rate limit is encountered. A comparison of the responses between the limited baseline and proposed adaptive controllers, along with a plot of the saturation disturbance $\Delta u_2$ for the proposed adaptive controller is shown in Figure \ref{f:latr_error}.

\subsection{Hypersonic Vehicle Numerical Simulation}
\label{ss:Hypersonic}

This section provides numerical simulation results for an open loop unstable nonlinear hypersonic vehicle model from \cite{Wiese_2015,Wiese_2016,Wiese_2017} in order to further demonstrate the need for an adaptive controller capable of handling both rate limits and vehicle uncertainties. This nonlinear vehicle model also has aerodynamic forces and moments calculated from aerodynamic parameters based on flight condition. In order to obtain linearized dynamics for control design as in (\ref{e:plant}), a trim point was obtained at a straight and level flying condition at Mach 6 at an altitude of 80,000 ft. The control design for the hypersonic vehicle was separated into velocity, longitudinal and lateral-directional dynamics. In order to demonstrate uncertainty in the longitudinal dynamics, separate controllers were designed for the velocity and lateral-directional subsystems to maintain the trim condition in their respective subsystems. As in \cite{Wiese_2015,Wiese_2016,Wiese_2017}, the longitudinal subsystem is restated as
\begin{equation*}
x_p=
\begin{bmatrix}
\alpha&q
\end{bmatrix}^T
\>~
u_p=\delta_e
\>~
y_p=q
\>~
z_p=q
\end{equation*}
where the longitudinal state is composed of the vehicle's angle of attack $\alpha$ and pitch rate $q$. Pitch rate is both the measured and regulated variable. An elevator deflection $\delta_e$ represents the input to the dynamics. The elevator gain parameter $\tau$ and saturation levels are as follows:
\begin{equation*}
\tau=0.02
\>~
u_{max}=\pm 30~\text{deg}
\>~
u_{r,max}=\pm 100~\text{dps}.
\end{equation*}

\begin{figure}[!t]
	\centering
	\includegraphics[trim={1.7cm 3.8cm 1.9cm 4.1cm},clip,width=\linewidth]{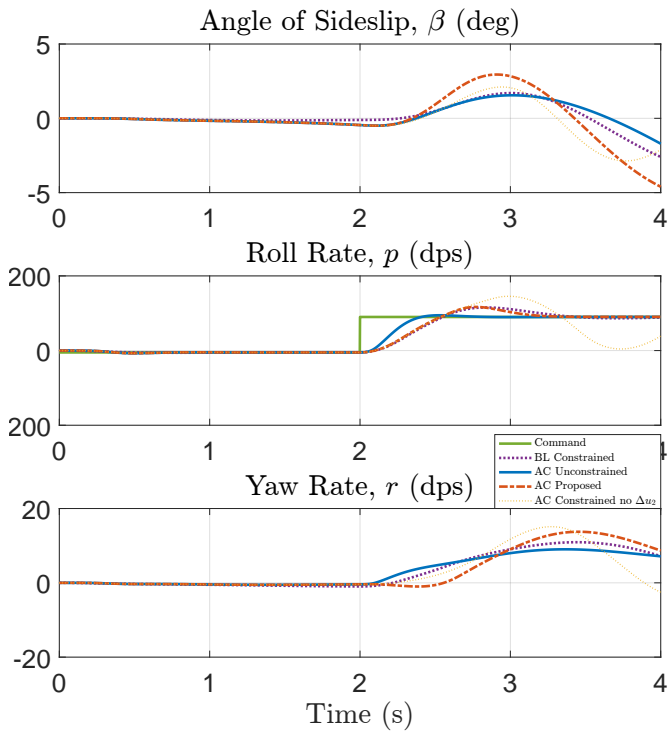}
	\caption{F-16 aircraft. Lateral-Directional state response comparison between baseline only control (BL Constrained), unconstrained adaptive control (AC Unconstrained), and constrained adaptive control with (AC Proposed) and without (AC Constrained no $\Delta u_2$) update law saturation modification.}
	\label{f:state_3}
\end{figure}
\begin{figure}[!h]
	\centering
	\includegraphics[trim={1.7cm 3.8cm 1.9cm 4.1cm},clip,width=\linewidth]{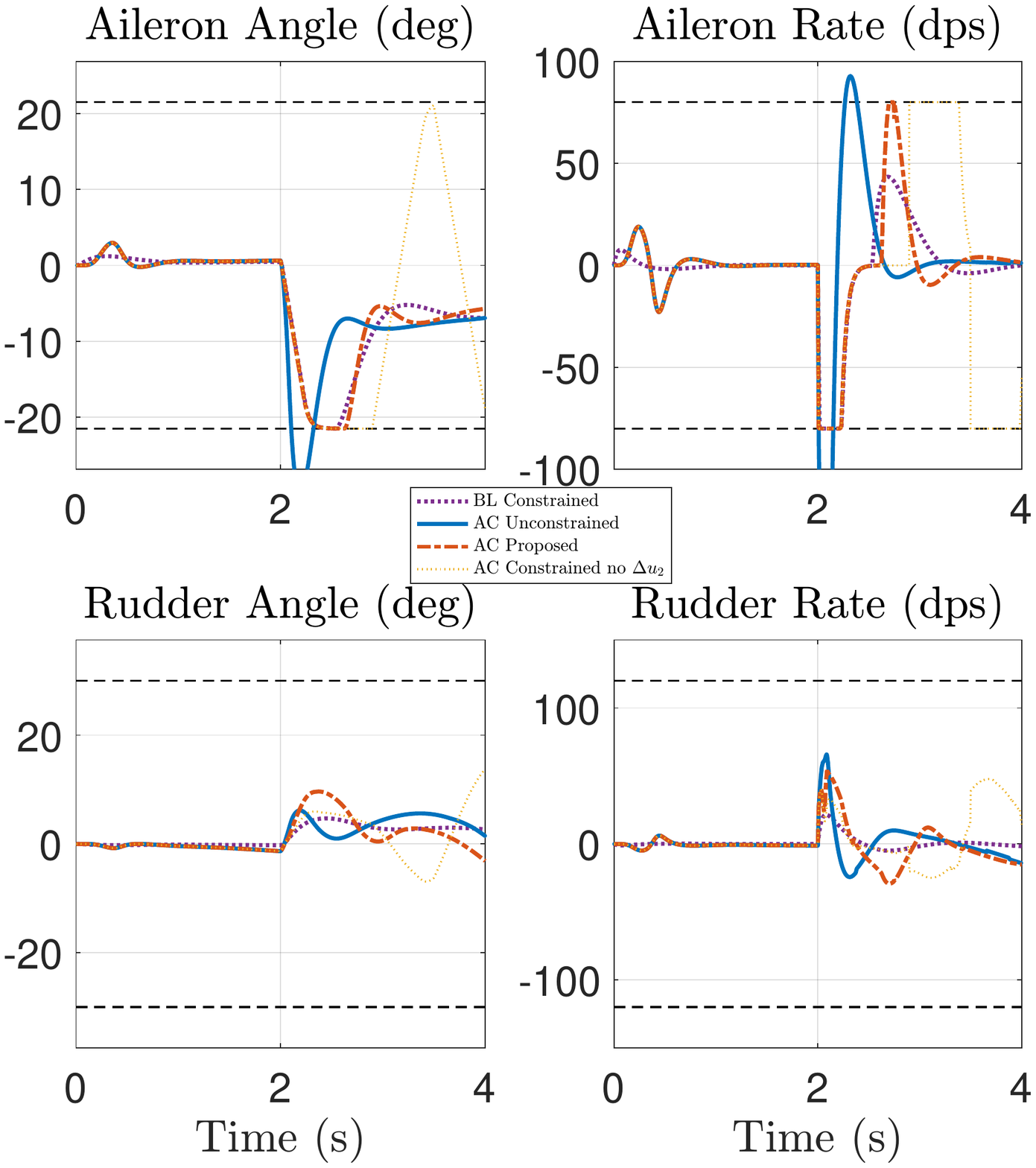}
	\caption{F-16 aircraft. Lateral-Directional control magnitude and rate response comparison between baseline only control (BL Constrained), unconstrained adaptive control (AC Unconstrained), and constrained adaptive control with (AC Proposed) and without (AC Constrained no $\Delta u_2$) update law saturation modification.}
	\label{f:control_2}
\end{figure}
\begin{figure}[!h]
	\centering
	\includegraphics[trim={2.0cm 9cm 2cm 9cm},clip,width=\linewidth]{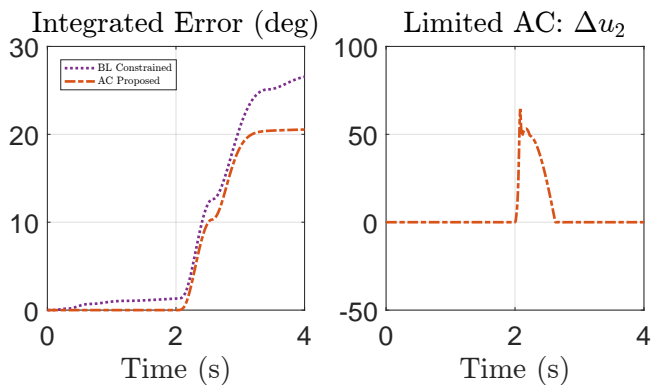}
	\caption{F-16 aircraft. Left: Integral of the absolute value of the error between the ideal (unconstrained) adaptive control pitch rate response and the constrained baseline/proposed adaptive responses of Figure \ref{f:state_3}. Right: Time history of combined magnitude and rate saturation aileron disturbance (\ref{e:Control_Deficiency}) for the proposed constrained adaptive controller with saturation modification.}
	\label{f:latr_error}
\end{figure}

In contrast to the F-16 simulation in the previous subsection, the longitudinal dynamics of this hypersonic vehicle numerical simulation are open loop unstable. Once again, integral command tracking (\ref{e:integral_error}) was included in addition to the explicit inclusion of the dynamics of the magnitude and rate saturated actuator as in (\ref{e:filter}), resulting in complete plant models as in (\ref{e:full_plant_full}), (\ref{e:Plant_Simple}) which satisfy Assumptions \ref{a:minimal}-\ref{a:rel1}, as required.

Figures \ref{f:state_2_hypersonic} and \ref{f:control_1_hypersonic} show the response of the hypersonic vehicle in the presence of a $50\%$ decrease in the control effectiveness of all control surfaces and a shift in the center of gravity $0.1$ ft rearward, with the pitch rate command stepped every two seconds. The uncertainty present in this simulation significantly degrades the ability of the non-adaptive baseline controller in (\ref{e:u_bl}) to track the desired pitch rate command. The angle of attack response of the baseline controller as seen in Figure \ref{f:state_2_hypersonic} additionally becomes unacceptably large for a hypersonic vehicle. This is another example of the motivation to employ an adaptive controller which is capable of countering the effects of uncertainty and restore the desired system response. In the absence of input limits, the adaptive control input, as in \cite{Zheng2013,Qu2015,Qu2016,Qu2016a}, and designed in (\ref{e:Control_Input}) can be seen to result satisfactory command tracking in Figure \ref{f:state_2_hypersonic} by the use of large control rates initially as seen in Figure \ref{f:control_1_hypersonic}. However, in the presence of input magnitude and rate limits, this controller can be seen to result in instability. The proposed adaptive controller which includes the modified error model (\ref{e:eu_dot}), adaptive update law (\ref{e:Omega_Delta_dot}) and explicitly accounts for the presence of input magnitude and rate limits can be seen to result in a stable response which recovers the desired command tracking performance. Figure \ref{f:long_error_hypersonic} shows the integrated absolute value of the error between the ideal unconstrained adaptive control response and each of the baseline response and the proposed adaptive control response. The baseline response can be seen to perform significantly worse as compared to the proposed adaptive control response which explicitly accounts for input limits. The open loop unstable nature of the longitudinal dynamics of the hypersonic vehicle can be seen to result in a greater difference in the performance of the adaptive input limiter controller as compared to the baseline controller.

\begin{figure}[!t]
	\centering
	\includegraphics[trim={1.7cm 3.8cm 1.9cm 4.1cm},clip,width=\linewidth]{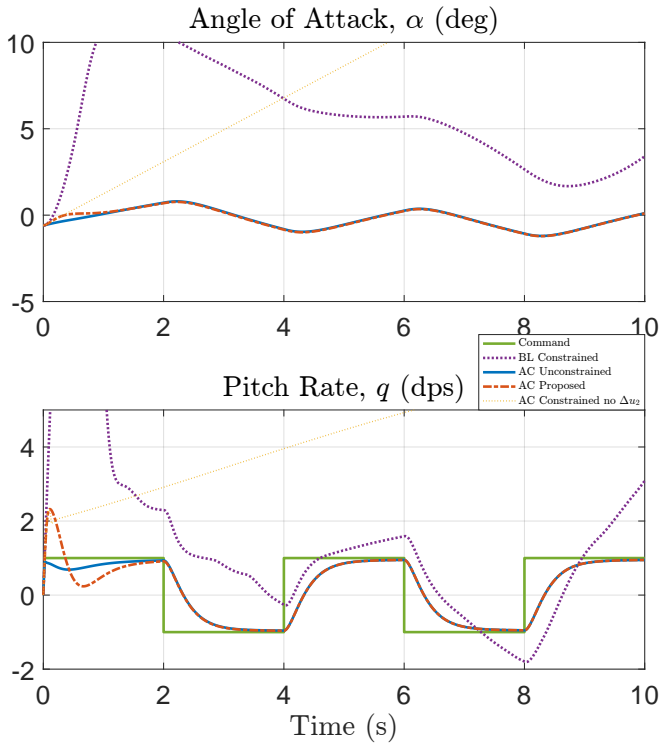}
	\caption{Hypersonic Vehicle. Longitudinal state response comparison between baseline only control (BL Constrained), unconstrained adaptive control (AC Unconstrained), and constrained adaptive control with (AC Proposed) and without (AC Constrained no $\Delta u_2$) update law saturation modification.}
	\label{f:state_2_hypersonic}
\end{figure}
\begin{figure}[!h]
	\centering
	\includegraphics[trim={1.7cm 3.8cm 1.9cm 4.1cm},clip,width=\linewidth]{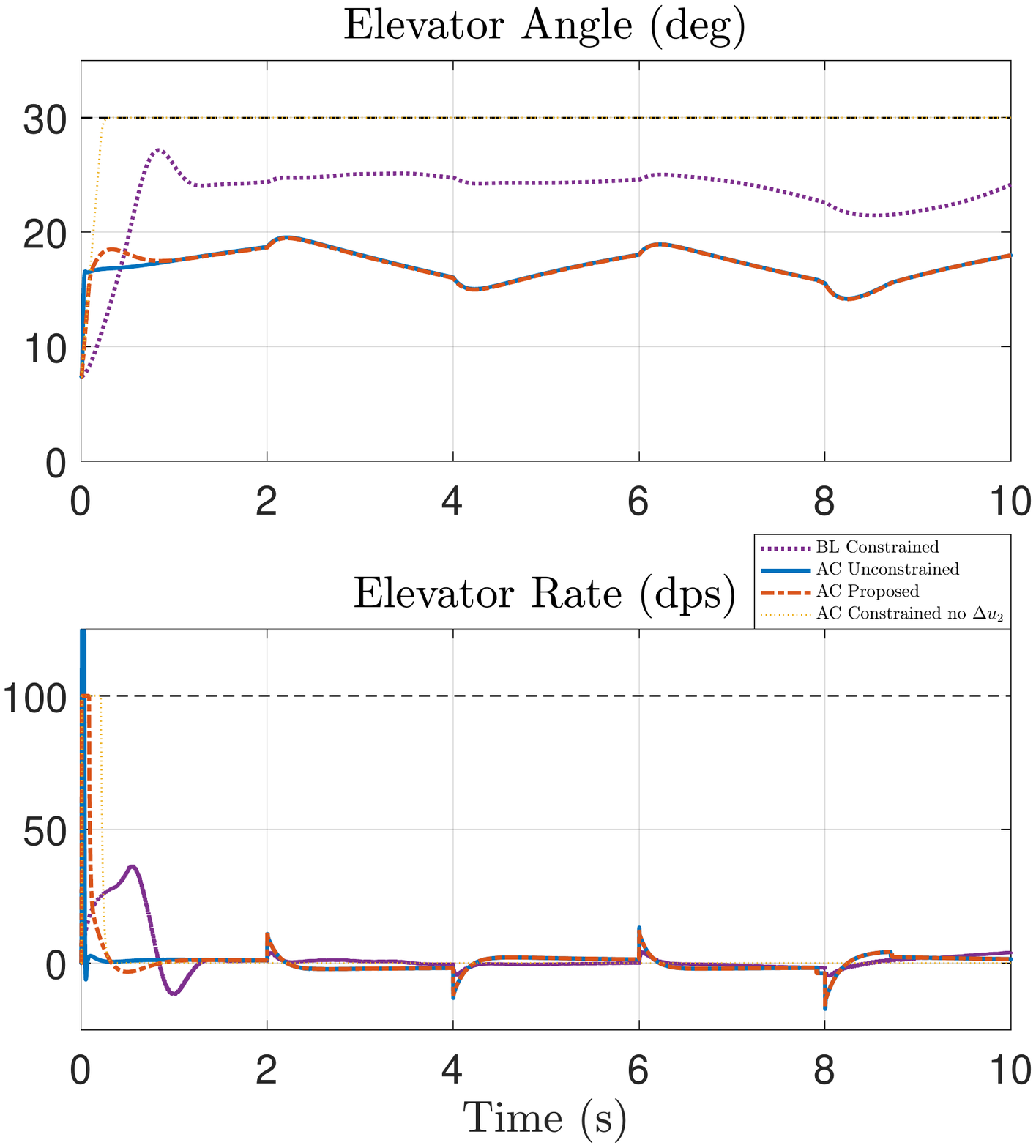}
	\caption{Hypersonic Vehicle. Longitudinal control magnitude and rate response comparison between baseline only control (BL Constrained), unconstrained adaptive control (AC Unconstrained), and constrained adaptive control with (AC Proposed) and without (AC Constrained no $\Delta u_2$) update law saturation modification.}
	\label{f:control_1_hypersonic}
\end{figure}
\begin{figure}[!h]
	\centering
	\includegraphics[trim={2.0cm 9cm 2cm 9cm},clip,width=\linewidth]{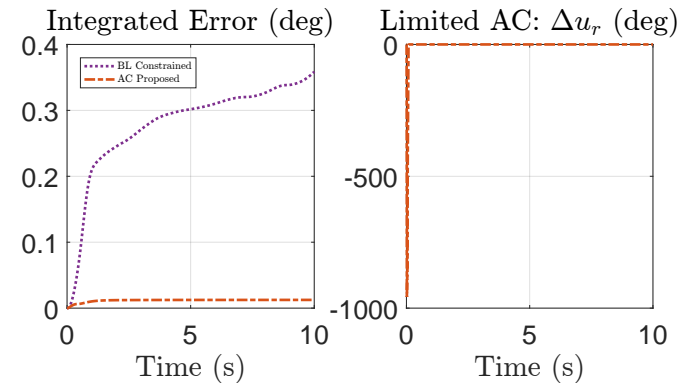}
	\caption{Hypersonic Vehicle. Left: Integral of the absolute value of the error between the ideal (unconstrained) adaptive control pitch rate response and the constrained baseline/proposed adaptive responses of Figure \ref{f:state_2_hypersonic}. Right: Time history of rate saturation elevator disturbance (\ref{e:Control_Deficiency}) for the proposed constrained adaptive controller with saturation modification.}
	\label{f:long_error_hypersonic}
\end{figure}

\section{Conclusion}\label{s:Conclusion}
This paper presented the first MIMO adaptive architecture for controlling a plant model in the presence of parametric uncertainties in the presence of hard limits on both input magnitude and rate. The plant input is limited in both magnitude and rate through the use of a filter with hard saturation nonlinearities placed in the control path. Augmented adaptive update laws which explicitly account for the presence of the input limits were derived to stabilize the system while in saturation. The proof of stability of signals in the closed-loop system contained two parts: boundedness of the adaptive parameters and boundedness of the state. The main stability result greatly extends the current state-of-the-art in MIMO adaptive control in the presence of rate limits, as it provides a solution in output feedback form with hard saturation nonlinearities employed. The proposed controller was applied to a nonlinear numerical simulation of an open loop stable F-16 aircraft as well as a nonlinear numerical simulation of an open loop unstable hypersonic vehicle. The MIMO simulations demonstrate successful input limiting, stability, and tracking performance of the adaptive controller presented in this paper.


%

\appendices
\section{Constants and Time-Varying Scalars}\label{app:constants}
Throughout this paper the norm symbol $||X||$ denotes the 2-norm of element $X$.
\subsection{Constant Scalars}
Define $P_B$ such that: $||\chi^T(t)P_{cl}B_{\Omega}||\leq P_B||\chi(t)||$

Define $P_C$ such that: $||\chi^T(t)P_{cl}C_{\Delta \bar{u}_2}||\leq P_C||\chi(t)||$

Define $P_Z$ such that: $||\chi^T(t)P_{cl}B_z||\leq P_Z||\chi(t)||$
$$\gamma_{max}=\max\left[eig(\Gamma_{\Omega}),eig(\Gamma_{\Omega_{\Delta}})\right],\quad \lambda_{min}=\min(eig(\Lambda^*))$$
$$u_{min}=\min_i(u_{max,i}),\quad u_{r,min}=\min_i(u_{r,max,i})$$
$$p_{min}=\min(eig(P_{cl})),\quad p_{max}=\max(eig(P_{cl}))$$
$$\rho=\sqrt{\frac{p_{max}}{p_{min}}},\quad q_0=\min(eig(Q_{cl}))$$
$$\tilde{\Omega}_{max}=\sup_t||\tilde{\Omega}(t)||$$
$$||B_{\xi,\Omega}||=(||\Omega^*||+\tilde{\Omega}_{max})||B_{\xi}||$$
$$||K_{\xi,\Omega}||=(||\Omega^*||+\tilde{\Omega}_{max})||K_{\xi}||$$
\begin{align*}
\begin{split}
||&K_{u_p}||=\\
&||C_{u_p}||\left(||B_2\Lambda^*a_1^1||\left(\tilde{\Omega}_{max}||C_{\bar{\xi}}||+\left(||\Omega^*||+1\right)\right)+2\right)
\end{split}
\end{align*}
\begin{equation}
\kappa_1=2P_C||S_C||\tilde{\Omega}_{max}||\Gamma_C||
\end{equation}
\begin{equation}
\kappa_2=\left|-q_0+2\tilde{\Omega}_{max}P_B||C_{\bar{\xi}}||+2P_C\frac{1}{a_1^1}||K_{\xi,\Omega}||\right|
\end{equation}
\begin{equation}
\kappa_3=\alpha u_{min}-\left(2P_Z+2P_C\frac{1}{a_1^1}||B_{\xi,\Omega}||\right)z_{cmd,max}
\end{equation}
\begin{equation}
\kappa_4=\tilde{\Omega}_{max}^2\frac{a_1^1P_B||S_C||\cdot||\Gamma_C||\cdot||C_{\bar{\xi}}||}{||K_{\xi,\Omega}||}
\end{equation}
\begin{equation}
\kappa_5=q_0-3\tilde{\Omega}_{max}P_B||C_{\bar{\xi}}||
\end{equation}
\begin{equation}
\kappa_6=\left(2P_Z+\tilde{\Omega}_{max}\frac{P_B||B_{\xi,\Omega}||\cdot||C_{\bar{\xi}}||}{||K_{\xi,\Omega}||}\right)z_{cmd,max}
\end{equation}
\begin{equation}
\kappa_7=\left|-q_0+2\tilde{\Omega}_{max}P_B||C_{\bar{\xi}}||+2P_C\frac{1}{a_1^1}\left(||K_{\xi,\Omega}||+||K_{u_p}||\right)\right|
\end{equation}
\begin{equation}
\kappa_8=\beta u_{r,min}-\left(2P_Z+2P_C\frac{1}{a_1^1}||B_{\xi,\Omega}||\right)z_{cmd,max}
\end{equation}
\begin{equation}
\kappa_9=\tilde{\Omega}_{max}^2\frac{a_1^1P_B||S_C||\cdot||\Gamma_C||\cdot||C_{\bar{\xi}}||}{||K_{\xi,\Omega}||+||K_{u_p}||}
\end{equation}
\begin{equation}
\kappa_{10}=\left(2P_Z+\tilde{\Omega}_{max}\frac{P_B||B_{\xi,\Omega}||\cdot||C_{\bar{\xi}}||}{||K_{\xi,\Omega}||+||K_{u_p}||}\right)z_{cmd,max}
\end{equation}
\begin{equation}
\kappa_{11}=2P_Zz_{cmd,max}
\end{equation}
\begin{equation}
\kappa_{12}=q_0-2\tilde{\Omega}_{max}P_B||C_{\bar{\xi}}||
\end{equation}
\begin{align}\label{e:chi_min}
\begin{split}
\chi_{min}=\max&\left(\frac{\kappa_5-\sqrt{\kappa_5^2-4\kappa_4\kappa_6}}{2\kappa_4},\right.\\
&\left.\frac{\kappa_5-\sqrt{\kappa_5^2-4\kappa_9\kappa_{10}}}{2\kappa_9},\frac{\kappa_{11}}{\kappa_{12}}\right)
\end{split}
\end{align}
\begin{align}
\begin{split}
\label{e:chi_max}
\chi_{max}=\min&\left(\frac{\sqrt{\kappa_2^2+4\kappa_1\kappa_3}-\kappa_2}{2\kappa_1},\frac{\kappa_5+\sqrt{\kappa_5^2-4\kappa_4\kappa_6}}{2\kappa_4},\right.\\
&\left.\frac{\sqrt{\kappa_7^2+4\kappa_1\kappa_8}-\kappa_7}{2\kappa_1},\frac{\kappa_5+\sqrt{\kappa_5^2-4\kappa_9\kappa_{10}}}{2\kappa_9}\right)
\end{split}
\end{align}
\begin{equation}
\bar{\Omega}_{max}=\frac{q_0}{3P_B||C_{\bar{\xi}}||}
\end{equation}
Note that $\tilde{\Omega}_{max}<\sqrt{\frac{V(0)\gamma_{max}}{\lambda_{min}}}$. Additionally, the ratio $\frac{q_0}{p_{max}}$ is maximized with a choice of $Q_{cl}=I$ as seen in reference \cite{Slotine1991}. A bound on $\tilde{\Omega}$ may be enforced using the projection operator as is common in adaptive control \cite{Hussain_2017}.

\subsection{Constant Matrices}
$$A_{cl}=
\begin{bmatrix}
-\frac{a_1^0}{a_1^1}I&\frac{1}{a_1^1}I&0&0\\
0&A-B_2K&LC&0\\
0&0&A^*_L&B_2^1\Lambda^*\\
0&0&0&-\frac{a_1^0}{a_1^1}I
\end{bmatrix}$$
$$B_{\Omega}^T=
\begin{bmatrix}
0&0&(B_2^1\Lambda^*)^T&0
\end{bmatrix}$$
$$B_Z^T=
\begin{bmatrix}
0&B_z^T&0&0
\end{bmatrix}, \quad
B_{\xi}^T=
\begin{bmatrix}
0&a_1^1B_z^T&0
\end{bmatrix}$$
$$C_{\Delta \bar{u}_2}^T=
\begin{bmatrix}
0&0&0&I
\end{bmatrix},\quad
C_{u_p}=
\begin{bmatrix}
0&\Lambda^{*-1}&0
\end{bmatrix}$$
$$C_{\bar{\xi}}=
\begin{bmatrix}
K&0&0&0\\
0&I&0&0\\
I&0&0&0
\end{bmatrix}$$
$$K_{\xi}=
\begin{bmatrix}
0&K&0&0\\
0&(a_1^0I+a_1^1(A-B_2K))&a_1^1LC&0\\
0&I&0&0
\end{bmatrix}$$
$$S_C=S_2C,\quad \Gamma_C=C_{\bar{\xi}}^T\Gamma_{\Omega}C_{\bar{\xi}}$$

\subsection{Time-Varying Scalars}
For ease of exposition, the following time varying scalars which represent the ratio of the saturated input magnitude and rate respectively to the unsaturated input magnitude and rate:
$$\mathbb{U}=\frac{||E_s(u(t),u_{max})||}{||u(t)||},\quad \mathbb{U}_r=\frac{||E_s(u_r(t),u_{r,max})||}{||u_r(t)||}$$
The magnitude of the computed control input (\ref{e:Control_Input}) may be expressed as follows:
\begin{equation}
\label{e:norm_u}
\begin{alignedat}{2}
||u(t)||&=&~&||(a^1_1s+a^0_1)\Omega^T(t)\bar{\xi}(t)||\\
&=&~&||\Omega^T(t)(a_1^1s+a_1^0)\bar{\xi}(t)+a_1^1\dot{\Omega}^T(t)\bar{\xi}(t)||\\
&=&~&||-\Omega^T(t)K_{\xi}\chi(t)-\Omega^T(t)B_{\xi}z_{cmd}(t)\\
&&~&-a_1^1S_2Ce_{mu}(t)\chi^T(t)\Gamma_C\chi(t)||\\
&\leq&~&||K_{\xi,\Omega}||\cdot||\chi(t)||+||B_{\xi,\Omega}||z_{cmd,max}\\
&&&+a_1^1||S_C||\tilde{\Omega}_{max}||\Gamma_C||\cdot||\chi(t)||^2
\end{alignedat}
\end{equation}
The magnitude of the plant input may be expressed as follows, through applying equation (\ref{e:e_mx_representation}):
\begin{equation}
\label{e:norm_u_p}
\begin{alignedat}{2}
||u_p(t)||&=&~&||C_{u_p}(e_x(t)+x_m(t))||\\
&=&~&||C_{u_p}(B_2\Lambda^*a_1^1\left[\bar{\Psi}^{*T}_2\bar{x}_m(t)+\Omega^T(t)\bar{\xi}(t)\right.\\
&&&\left.-\Lambda^{*-1}\bar{u}_{bl}(t)+\Delta \bar{u}_2(t)\right]+e_{mx}(t)+x_m(t))||\\
&\leq&~&||K_{u_p}||\cdot||\chi(t)||
\end{alignedat}
\end{equation}
Additionally, from equation (\ref{e:u_r}) the following inequality holds:
\begin{equation}
\label{e:norm_u_r}
||u_r(t)||\leq\frac{1}{\tau}\left(||u(t)||+||u_p(t)||\right)
\end{equation}

\section{Proof of Theorem \ref{t:One}, Step 2}\label{app:proof}
\subsection{$\Delta u=0,\Delta u_r=0$}
In this case (\ref{e:Chi_full}) can be simplified as follows:
\begin{equation*}
\dot{\chi}(t)=A_{cl}\chi(t)-B_{\Omega}\tilde{\Omega}^{T}(t)C_{\bar{\xi}}\chi(t)+B_Zz_{cmd}(t)
\end{equation*}
This leads to the following time derivative of the Lyapunov function in equation (\ref{e:Lyap}):
\begin{equation*}
\begin{alignedat}{2}
\dot{W}(\chi(t))&=&~&-\chi^T(t)Q_{cl}\chi(t)-2\chi^T(t)P_{cl}B_{\Omega}\tilde{\Omega}^{T}(t)C_{\bar{\xi}}\chi(t)\\
&&&+2\chi^T(t)P_{cl}B_Zz_{cmd}(t)
\end{alignedat}
\end{equation*}
The right hand side may be bounded as follows:
\begin{equation*}
\begin{alignedat}{2}
\dot{W}(\chi(t))&\leq&~&-\left(q_0-2\tilde{\Omega}_{max}P_B||C_{\bar{\xi}}||\right)||\chi(t)||^2\\
&&&+\left(2P_Zz_{cmd,max}\right)||\chi(t)||
\end{alignedat}
\end{equation*}
Using the definition of $\tilde{\Omega}_{max}$ and condition (\textit{ii}) of Theorem \ref{t:One}, implies the following:
\begin{equation*}
\dot{W}(\chi(t))<0,\quad ||\chi(t)||>\frac{\kappa_{11}}{\kappa_{12}}
\end{equation*}
Choosing $\chi_{min}$ from equation (\ref{e:chi_min}) implies the following:
\begin{equation}
\label{e:1}
\dot{W}(\chi(t))<0,\quad \forall\chi(t)\in \mathcal{A}\text{ in case 1}
\end{equation}

\subsection{$\Delta u\neq0,\Delta u_r=0$}
In this case (\ref{e:Chi_full}) can be rewritten as:
\begin{equation*}
\begin{alignedat}{2}
\dot{\chi}(t)&=&~&A_{cl}\chi(t)-B_{\Omega}\tilde{\Omega}^{T}(t)C_{\bar{\xi}}\chi(t)\\
&&&+C_{\Delta \bar{u}_2}\frac{1}{a_1^1}\left(\mathbb{U}-1\right)u(t)+B_Zz_{cmd}(t)
\end{alignedat}
\end{equation*}
This leads to the following time derivative of the Lyapunov function in equation (\ref{e:Lyap}):
\begin{equation*}
\begin{alignedat}{2}
\dot{W}(\chi(t))&=&~&-\chi^T(t)Q_{cl}\chi(t)-2\chi^T(t)P_{cl}B_{\Omega}\tilde{\Omega}^{T}(t)C_{\bar{\xi}}\chi(t)\\
&&&+2\chi^T(t)P_{cl}C_{\Delta \bar{u}_2}\frac{1}{a_1^1}\left(\mathbb{U}-1\right)u(t)\\
&&&+2\chi^T(t)P_{cl}B_Zz_{cmd}(t)
\end{alignedat}
\end{equation*}
Three sub-cases are considered in this section.

\subsubsection{Sub-case i}
\begin{equation*}
2\chi^T(t)P_{cl}C_{\Delta \bar{u}_2}\frac{1}{a_1^1}\mathbb{U}u(t)<-\alpha u_{min}||\chi(t)||
\end{equation*}
The time derivative of the Lyapunov function may be bounded as follows using the condition of this sub-case and (\ref{e:norm_u}):
\begin{align*}
\begin{split}
&\dot{W}(\chi(t))\leq\left(2P_C||S_C||\tilde{\Omega}_{max}||\Gamma_C||\right)||\chi(t)||^3\\
&+\left(-q_0+2\tilde{\Omega}_{max}P_B||C_{\bar{\xi}}||+2P_C\frac{1}{a_1^1}||K_{\xi,\Omega}||\right)||\chi(t)||^2\\
&-\left(\alpha u_{min}-\left(2P_Z+2P_C\frac{1}{a_1^1}||B_{\xi,\Omega}||\right)z_{cmd,max}\right)||\chi(t)||\\
\end{split}
\end{align*}
Thus:
\begin{equation*}
\dot{W}(\chi(t))<0,\quad ||\chi(t)||<\frac{\sqrt{\kappa_2^2+4\kappa_1\kappa_3}-\kappa_2}{2\kappa_1}
\end{equation*}
Choosing $\chi_{max}$ from equation (\ref{e:chi_max}) implies the following:
\begin{equation}
\label{e:21}
\dot{W}(\chi(t))<0,\quad \forall\chi(t)\in \mathcal{A}\text{ in case 2, sub-case (\textit{i})}
\end{equation}

\subsubsection{Sub-case ii}
\begin{equation*}
0>2\chi^T(t)P_{cl}C_{\Delta \bar{u}_2}\frac{1}{a_1^1}\mathbb{U}u(t)>-\alpha u_{min}||\chi(t)||
\end{equation*}
The condition of this sub-case implies the following:
\begin{equation*}
0<2\chi^T(t)P_{cl}C_{\Delta \bar{u}_2}\frac{1}{a_1^1}\mathbb{U}u(t)+\alpha u_{min}||\chi(t)||
\end{equation*}
Apply $\frac{u_{min}}{||E_s(u(t),u_{max})||}\leq 1$:
\begin{equation*}
0<2\chi^T(t)P_{cl}C_{\Delta \bar{u}_2}\frac{1}{a_1^1}u(t)+\alpha||u(t)||\cdot||\chi(t)||
\end{equation*}
Add terms to create $\dot{W}(\chi(t))$ on the left hand side:
\begin{equation*}
\begin{alignedat}{2}
\dot{W}(\chi(t))&<&~&-\chi^T(t)Q_{cl}\chi(t)-2\chi^T(t)P_{cl}B_{\Omega}\tilde{\Omega}^{T}(t)C_{\bar{\xi}}\chi(t)\\
&&&+2\chi^T(t)P_{cl}C_{\Delta \bar{u}_2}\frac{1}{a_1^1}\left(\mathbb{U}-1\right)u(t)\\
&&&+2\chi^T(t)P_{cl}B_Zz_{cmd}(t)\\
&&&+2\chi^T(t)P_{cl}C_{\Delta \bar{u}_2}\frac{1}{a_1^1}u(t)\\
&&&+\alpha||u(t)||\cdot||\chi(t)||
\end{alignedat}
\end{equation*}
Apply case inequality:
\begin{equation*}
0>2\chi^T(t)P_{cl}C_{\Delta \bar{u}_2}\frac{1}{a_1^1}\mathbb{U}u(t)
\end{equation*}
Thus:
\begin{equation*}
\begin{alignedat}{2}
\dot{W}(\chi(t))&<&~&-\chi^T(t)Q_{cl}\chi(t)-2\chi^T(t)P_{cl}B_{\Omega}\tilde{\Omega}^T(t)C_{\bar{\xi}}\chi(t)\\
&&&+\alpha||u(t)||\cdot||\chi(t)||+2\chi^T(t)P_{cl}B_Zz_{cmd}(t)
\end{alignedat}
\end{equation*}
Apply (\ref{e:norm_u}):
\begin{equation*}
\begin{alignedat}{2}
\dot{W}(\chi(t))&<&~&-\chi^T(t)Q_{cl}\chi(t)-2\chi^T(t)P_{cl}B_{\Omega}\tilde{\Omega}^T(t)C_{\bar{\xi}}\chi(t)\\
&&&+\alpha ||K_{\xi,\Omega}||\cdot||\chi(t)||^2\\
&&&+\alpha ||B_{\xi,\Omega}||z_{cmd,max}||\chi(t)||\\
&&&+\alpha a_1^1||S_C||\tilde{\Omega}_{max}||\Gamma_C||\cdot||\chi(t)||^3\\
&&&+2\chi^T(t)P_{cl}B_Zz_{cmd}(t)
\end{alignedat}
\end{equation*}
Maximize the right hand side:
\begin{equation*}
\begin{alignedat}{2}
\dot{W}(\chi(t))&<&~&\left(\alpha a_1^1||S_C||\tilde{\Omega}_{max}||\Gamma_C||\right)||\chi(t)||^3\\
&&&-\left(q_0-2P_B\tilde{\Omega}_{max}||C_{\bar{\xi}}||-\alpha ||K_{\xi,\Omega}||\right)||\chi(t)||^2\\
&&&+\left((2P_Z+\alpha ||B_{\xi,\Omega}||)z_{cmd,max}\right)||\chi(t)||
\end{alignedat}
\end{equation*}
Apply $\alpha=\frac{P_B\tilde{\Omega}_{max}||C_{\bar{\xi}}||}{||K_{\xi,\Omega}||}$:
\begin{equation*}
\begin{alignedat}{2}
\dot{W}(\chi(t))&<&~&\left(\tilde{\Omega}_{max}^2\frac{a_1^1P_B||S_C||\cdot||\Gamma_C||\cdot||C_{\bar{\xi}}||}{||K_{\xi,\Omega}||}\right)||\chi(t)||^3\\
&&&-\left(q_0-3\tilde{\Omega}_{max}P_B||C_{\bar{\xi}}||\right)||\chi(t)||^2\\
&&&+\left(\left(2P_Z+\tilde{\Omega}_{max}\frac{P_B||B_{\xi,\Omega}||\cdot||C_{\bar{\xi}}||}{||K_{\xi,\Omega}||}\right)\right.\\
&&&\left.\times z_{cmd,max}\right)||\chi(t)||
\end{alignedat}
\end{equation*}
Using the definition of $\tilde{\Omega}_{max}$ and condition (\textit{ii}) of Theorem \ref{t:One}, implies the following:
\begin{align*}
\begin{split}
&\dot{W}(\chi(t))<0,\quad\text{for}\\ &\frac{\kappa_5-\sqrt{\kappa_5^2-4\kappa_4\kappa_6}}{2\kappa_4}<||\chi(t)||<\frac{\kappa_5+\sqrt{\kappa_5^2-4\kappa_4\kappa_6}}{2\kappa_4}
\end{split}
\end{align*}
Choosing $\chi_{min},\chi_{max}$ from equations (\ref{e:chi_min}) and (\ref{e:chi_max}) implies the following:
\begin{equation}
\label{e:22}
\dot{W}(\chi(t))<0,\quad \forall\chi(t)\in \mathcal{A}\text{ in case 2, sub-case (\textit{ii})}
\end{equation}

\subsubsection{Sub-case iii}
\begin{equation*}
2\chi^T(t)P_{cl}C_{\Delta \bar{u}_2}\frac{1}{a_1^1}\mathbb{U}u(t)>0
\end{equation*}
The condition of this sub-case implies the following inequality:
\begin{equation*}
2\chi^T(t)P_{cl}C_{\Delta \bar{u}_2}\frac{1}{a_1^1}\mathbb{U}u(t)<2\chi^T(t)P_{cl}C_{\Delta \bar{u}_2}\frac{1}{a_1^1}u(t)
\end{equation*}
Add terms to create $\dot{W}(\chi(t))$ on the left hand side:
\begin{equation*}
\begin{alignedat}{2}
\dot{W}(\chi(t))&\leq&~&-\chi^T(t)Q_{cl}\chi(t)-2\chi^T(t)P_{cl}B_{\Omega}\tilde{\Omega}^{T}(t)C_{\bar{\xi}}\chi(t)\\
&&&+2\chi^T(t)P_{cl}B_Zz_{cmd}(t)
\end{alignedat}
\end{equation*}
The right hand side may be maximized as:
\begin{equation*}
\begin{alignedat}{2}
\dot{W}(\chi(t))&\leq&~&-\left(q_0-2\tilde{\Omega}_{max}P_B||C_{\bar{\xi}}||\right)||\chi(t)||^2\\
&&&+\left(2P_Zz_{cmd,max}\right)||\chi(t)||
\end{alignedat}
\end{equation*}
Using the definition of $\tilde{\Omega}_{max}$ and condition (\textit{ii}) of Theorem \ref{t:One}, implies the following:
\begin{equation*}
\dot{W}(\chi(t))<0,\quad ||\chi(t)||>\frac{\kappa_{11}}{\kappa_{12}}
\end{equation*}
Choosing $\chi_{min}$ from equation (\ref{e:chi_min}) implies the following:
\begin{equation}
\label{e:23}
\dot{W}(\chi(t))<0,\quad \forall\chi(t)\in \mathcal{A}\text{ in case 2, sub-case (\textit{iii})}
\end{equation}

\subsection{$\Delta u=0,\Delta u_r\neq0$}
In this case (\ref{e:Chi_full}) can be rewritten as:
\begin{equation*}
\begin{alignedat}{2}
\dot{\chi}(t)&=&~&A_{cl}\chi(t)-B_{\Omega}\tilde{\Omega}^{T}(t)C_{\bar{\xi}}\chi(t)\\
&&&+C_{\Delta \bar{u}_2}\frac{1}{a_1^1}\left(\mathbb{U}_r-1\right)\tau u_r(t)+B_Zz_{cmd}(t)
\end{alignedat}
\end{equation*}
The following is the time derivative of the Lyapunov function in equation (\ref{e:Lyap}):
\begin{equation*}
\begin{alignedat}{2}
\dot{W}(\chi(t))&=&~&-\chi^T(t)Q_{cl}\chi(t)-2\chi^T(t)P_{cl}B_{\Omega}\tilde{\Omega}^{T}(t)C_{\bar{\xi}}\chi(t)\\
&&&+2\chi^T(t)P_{cl}C_{\Delta \bar{u}_2}\frac{1}{a_1^1}\left(\mathbb{U}_r-1\right)\tau u_r(t)\\
&&&+2\chi^T(t)P_{cl}B_Zz_{cmd}(t)
\end{alignedat}
\end{equation*}
Three sub-cases are considered in this section.

\subsubsection{Sub-case i}
\begin{equation*}
2\chi^T(t)P_{cl}C_{\Delta \bar{u}_2}\frac{1}{a_1^1}\mathbb{U}_r\tau u_r(t)<-\beta u_{r,min}||\chi(t)||
\end{equation*}
The time derivative of the Lyapunov function may be bounded as follows using the condition of this sub-case and (\ref{e:norm_u_r}):
\begin{align*}
\begin{split}
&\dot{W}(\chi(t))\leq\left(2P_C||S_C||\tilde{\Omega}_{max}||\Gamma_C||\right)||\chi(t)||^3\\
&+\left(-q_0+2\tilde{\Omega}_{max}P_B||C_{\bar{\xi}}||\right.\\
&\left.+2P_C\frac{1}{a_1^1}\left(||K_{\xi,\Omega}||+||K_{u_p}||\right)\right)||\chi(t)||^2\\
&-\left(\beta u_{r,min}-\left(2P_Z+2P_C\frac{1}{a_1^1}||B_{\xi,\Omega}||\right)z_{cmd,max}\right)||\chi(t)||
\end{split}
\end{align*}
Thus:
\begin{equation*}
\dot{W}(\chi(t))<0,\quad ||\chi(t)||<\frac{\sqrt{\kappa_7^2+4\kappa_1\kappa_8}-\kappa_7}{2\kappa_1}
\end{equation*}
Choosing $\chi_{max}$ from equation (\ref{e:chi_max}) implies the following:
\begin{equation}
\label{e:31}
\dot{W}(\chi(t))<0,\quad \forall\chi(t)\in \mathcal{A}\text{ in case 3, sub-case (\textit{i})}
\end{equation}

\subsubsection{Sub-case ii}
\begin{equation*}
0>2\chi^T(t)P_{cl}C_{\Delta \bar{u}_2}\frac{1}{a_1^1}\mathbb{U}_r\tau u_r(t)>-\beta u_{r,min}||\chi(t)||
\end{equation*}
The condition of this sub-case implies the following:
\begin{equation*}
0<2\chi^T(t)P_{cl}C_{\Delta \bar{u}_2}\frac{1}{a_1^1}\mathbb{U}_r\tau u_r(t)+\beta u_{r,min}||\chi(t)||
\end{equation*}
Apply $\frac{u_{r,min}}{||E_s(u_r(t),u_{r,max})||}\leq 1$:
\begin{equation*}
0<2\chi^T(t)P_{cl}C_{\Delta \bar{u}_2}\frac{1}{a_1^1}\tau u_r(t)+\beta||u_r(t)||\cdot||\chi(t)||
\end{equation*}
Add terms to create $\dot{W}(\chi(t))$ on the left hand side:
\begin{equation*}
\begin{alignedat}{2}
\dot{W}(\chi(t))&<&~&-\chi^T(t)Q_{cl}\chi(t)-2\chi^T(t)P_{cl}B_{\Omega}\tilde{\Omega}^{T}(t)C_{\bar{\xi}}\chi(t)\\
&&&+2\chi^T(t)P_{cl}C_{\Delta \bar{u}_2}\frac{1}{a_1^1}\left(\mathbb{U}_r-1\right)\tau u_r(t)\\
&&&+2\chi^T(t)P_{cl}B_Zz_{cmd}(t)\\
&&&+2\chi^T(t)P_{cl}C_{\Delta \bar{u}_2}\frac{1}{a_1^1}\tau u_r(t)\\
&&&+\beta||u_r(t)||\cdot||\chi(t)||
\end{alignedat}
\end{equation*}
Apply case inequality:
\begin{align*}
\begin{split}
&0>2\chi^T(t)P_{cl}C_{\Delta \bar{u}_2}\frac{1}{a_1^1}\mathbb{U}_r\tau u_r(t)
\end{split}
\end{align*}
Thus:
\begin{equation*}
\begin{alignedat}{2}
\dot{W}(\chi(t))&<&~&-\chi^T(t)Q_{cl}\chi(t)-2\chi^T(t)P_{cl}B_{\Omega}\tilde{\Omega}^{T}(t)C_{\bar{\xi}}\chi(t)\\
&&&+\beta||u_r(t)||\cdot||\chi(t)||+2\chi^T(t)P_{cl}B_Zz_{cmd}(t)
\end{alignedat}
\end{equation*}
Apply (\ref{e:norm_u}), (\ref{e:norm_u_p}), and (\ref{e:norm_u_r}):
\begin{equation*}
\begin{alignedat}{2}
\dot{W}(\chi(t))&<&~&-\chi^T(t)Q_{cl}\chi(t)-2\chi^T(t)P_{cl}B_{\Omega}\tilde{\Omega}^T(t)C_{\bar{\xi}}\chi(t)\\
&&&+\frac{\beta}{\tau} \left(||K_{\xi,\Omega}||+||K_{u_p}||\right)||\chi(t)||^2\\
&&&+\frac{\beta}{\tau} ||B_{\xi,\Omega}||z_{cmd,max}||\chi(t)||\\
&&&+\frac{\beta}{\tau} a_1^1||S_C||\tilde{\Omega}_{max}||\Gamma_C||\cdot||\chi(t)||^3\\
&&&+2\chi^T(t)P_{cl}B_Zz_{cmd}(t)
\end{alignedat}
\end{equation*}
Maximize the right hand side:
\begin{equation*}
\begin{alignedat}{2}
\dot{W}(\chi(t))&<&~&\left(\frac{\beta}{\tau} a_1^1||S_C||\tilde{\Omega}_{max}||\Gamma_C||\right)||\chi(t)||^3\\
&&&+\left(-q_0+2P_B\tilde{\Omega}_{max}||C_{\bar{\xi}}||\right.\\
&&&\left.+\frac{\beta}{\tau} \left(||K_{\xi,\Omega}||+||K_{u_p}||\right)\right)||\chi(t)||^2\\
&&&+\left(\left(2P_Z+\frac{\beta}{\tau}||B_{\xi,\Omega}||\right)z_{cmd,max}\right)||\chi(t)||
\end{alignedat}
\end{equation*}
Apply $\beta=\frac{\tau P_B\tilde{\Omega}_{max}||C_{\bar{\xi}}||}{||K_{\xi,\Omega}||+||K_{u_p}||}$:
\begin{equation*}
\begin{alignedat}{2}
\dot{W}(\chi(t))&<&~&\left(\tilde{\Omega}_{max}^2\frac{a_1^1P_B||S_C||\cdot||\Gamma_C||\cdot||C_{\bar{\xi}}||}{||K_{\xi,\Omega}||+||K_{u_p}||}\right)||\chi(t)||^3\\
&&&-\left(q_0-3\tilde{\Omega}_{max}P_B||C_{\bar{\xi}}||\right)||\chi(t)||^2\\
&&&+\left(\left(2P_Z+\tilde{\Omega}_{max}\frac{P_B||B_{\xi,\Omega}||\cdot||C_{\bar{\xi}}||}{||K_{\xi,\Omega}||+||K_{u_p}||}\right)\right.\\
&&&\left.\times z_{cmd,max}\right)||\chi(t)||
\end{alignedat}
\end{equation*}
Using the definition of $\tilde{\Omega}_{max}$ and condition (\textit{ii}) of Theorem \ref{t:One}, implies the following:
\begin{align*}
\begin{split}
&\dot{W}(\chi(t))<0,\quad\text{for}\\ &\frac{\kappa_5-\sqrt{\kappa_5^2-4\kappa_9\kappa_{10}}}{2\kappa_9}<||\chi(t)||<\frac{\kappa_5+\sqrt{\kappa_5^2-4\kappa_9\kappa_{10}}}{2\kappa_9}
\end{split}
\end{align*}
Choosing $\chi_{min},\chi_{max}$ from equations (\ref{e:chi_min}) and (\ref{e:chi_max}) implies the following:
\begin{equation}
\label{e:32}
\dot{W}(\chi(t))<0,\quad \forall\chi(t)\in \mathcal{A}\text{ in case 3, sub-case (\textit{ii})}
\end{equation}

\subsubsection{Sub-case iii}
\begin{equation*}
2\chi^T(t)P_{cl}C_{\Delta \bar{u}_2}\frac{1}{a_1^1}\mathbb{U}_r\tau u_r(t)>0
\end{equation*}
The condition of this sub-case implies the following inequality:
\begin{equation*}
2\chi^T(t)P_{cl}C_{\Delta \bar{u}_2}\frac{1}{a_1^1}\mathbb{U}_r\tau u_r(t)<2\chi^T(t)P_{cl}C_{\Delta \bar{u}_2}\frac{1}{a_1^1}\tau u_r(t)
\end{equation*}
Add terms to create $\dot{W}(\chi(t))$ on the left hand side:
\begin{equation*}
\begin{alignedat}{2}
\dot{W}(\chi(t))&\leq&~&-\chi^T(t)Q_{cl}\chi(t)-2\chi^T(t)P_{cl}B_{\Omega}\tilde{\Omega}^{T}(t)C_{\bar{\xi}}\chi(t)\\
&&&+2\chi^T(t)P_{cl}B_Zz_{cmd}(t)
\end{alignedat}
\end{equation*}
The right hand side may be maximized as:
\begin{equation*}
\begin{alignedat}{2}
\dot{W}(\chi(t))&\leq&~&-\left(q_0-2\tilde{\Omega}_{max}P_B||C_{\bar{\xi}}||\right)||\chi(t)||^2\\
&&&+\left(2P_Zz_{cmd,max}\right)||\chi(t)||
\end{alignedat}
\end{equation*}
Using the definition of $\tilde{\Omega}_{max}$ and condition (\textit{ii}) of Theorem \ref{t:One}, implies the following:
\begin{equation*}
\dot{W}(\chi(t))<0,\quad ||\chi(t)||>\frac{\kappa_{11}}{\kappa_{12}}
\end{equation*}
Choosing $\chi_{min}$ from equation (\ref{e:chi_min}) implies the following:
\begin{equation}
\label{e:33}
\dot{W}(\chi(t))<0,\quad \forall\chi(t)\in \mathcal{A}\text{ in case 3, sub-case (\textit{iii})}
\end{equation}
From equations (\ref{e:1}), (\ref{e:21}), (\ref{e:22}), (\ref{e:23}), (\ref{e:31}), (\ref{e:32}), (\ref{e:33}) it can be concluded that:
\begin{equation}
\dot{W}(\chi(t))<0,\quad \forall\chi(t)\in \mathcal{A}
\end{equation}


\section*{Acknowledgment}

This work was supported by the Air Force Research Laboratory, Collaborative Research and Development for Innovative Aerospace Leadership (CRDInAL), Thrust 3 - Control Automation and Mechanization grant FA 8650-16-C-2642 and the Boeing Strategic University Initiative. The authors acknowledge Dr. Daniel Wiese for providing useful hypersonic vehicle numerical simulation scripts.

\ifCLASSOPTIONcaptionsoff
  \newpage
\fi



\bibliographystyle{IEEEtran}
\bibliography{IEEEabrv,References}
\end{document}